\numberwithin{equation}{section}
\newtheorem{theo}{Theorem}
\newtheorem{prop}[theo]{Proposition}
\newtheorem{conj}{Conjecture}
\newtheorem{coro}[theo]{Corollary}
\newtheorem{rema}{Remark}
 \font\German=eufm10 scaled 1100
 \font\german=eufm10 scaled 750
\newenvironment{Remark}{\begin{rema}\rm}{\end{rema}}
\def\BH{\mbf{H}}
\def\BM{\mbf{M}}
\def\mbf#1{{\mathbf{#1}}}
\def\C{{\mathbb C}}
\def\CF{{\mathbb F}}
\def\CD{{\mathcal D}}
\def\N{{\mathbb N}}
\def\S{{\mathbb S}}
\def\S{\hbox{\German S}}
\def\s{\hbox{\german S}}
\def\R{{\mathbb R}}
\def\R{{\mathbb R}}
\def\BK{\mathbb{K}}
\def\FT{\mathrm{FT}}
\def\LT{\mathrm{LT}}
\title[Steenrod operators]{Harmonics for deformed Steenrod operators}
\author{Fran\c{c}ois Bergeron, Adriano Garsia and Nolan Wallach}
\address[F. Bergeron]{D\'epartement de Math\'ematiques\\ Universit\'e
  du Qu\'ebec \`a Montr\'eal\\ Montr\'eal, Qu\'ebec, H3C 3P8, CANADA}
\email{bergeron.francois@uqam.ca}
\date{\today}
\thanks{F. Bergeron is supported by NSERC-Canada and FQRNT-Qu\'ebec.}
\begin{document}
\maketitle
\begin{abstract}
  We explore in this paper the spaces of common zeros of several deformations of Steenrod operators. 
\end{abstract}

{ \parskip=0pt\footnotesize \tableofcontents}
\parskip=8pt

\section{Introduction}
In recent years many authors have studied variations on a striking classical result of invariant theory holding for any finite group $W$ of real $n\times n$ matrices generated by reflections. Roughly stated, this result asserts that there is a natural decomposition 
  \begin{equation}\label{decomp_W}
       \R[\mbf{x}]\simeq \R[\mbf{x}]^W\otimes \R[\mbf{x}]_W
  \end{equation}
of the ring of polynomials $\R[\mbf{x}]$, in $n$ variables $\mbf{x}=x_1,x_2,\ldots,x_n$, as a tensor product of the ring $\R[\mbf{x}]^W$ of $W$-invariant polynomials, and  the ``$W$-coinvariant-space'' $\R[\mbf{x}]_W$. This last  is simply the space obtained as the quotient
of the ring $\R[\mbf{x}]$ by the ideal generated by constant-term-free 
$W$-invariant polynomials. One of the many interesting reformulations of (\ref{decomp_W}) goes through the notion of $W$-{\em harmonic polynomials}, i.e.: the polynomials $f(\mbf{x})$ that satisfy all partial differential equations of the form
 \begin{equation}\label{diff_eq}
     p(\partial_{1},\partial_{2}, \ldots, \partial_{n}) f(\mbf{x})=0,
 \end{equation}
with $p(\partial_\mbf{x})=p(\partial_{1},\partial_{2}, \ldots, \partial_{n})$ varying in the set of differential operators obtained by replacing each variable $x_i$ by the derivation with respect to this variable, here denoted $\partial_{i}$,  in constant-term-free  $W$-invariant polynomial. Indeed, one can show that the solution set of (\ref{diff_eq}) is isomorphic as a $W$-module to $\R[\mbf{x}]_W$. Further striking facts about the space $\mathcal{H}_W$ of $W$-harmonic polynomials are that
 \begin{enumerate}
    \item[(a)] $\mathcal{H}_W$ has dimension equal to the order of $W$.
    \item[(b)] $\mathcal{H}_W$ is the span of all partial derivative (of all orders) of the single polynomial 
       \begin{equation}\label{Delta_W}
            \Delta_W(\mbf{x}):=
          \det\left(\frac{\partial f_j(\mbf{x})}{\partial x_i}\right)_{1\leq i,j\leq n}
       \end{equation}
where the $f_j$'s vary in any $n$-set of algebraically independent generators of the ring of $W$-invariant polynomials.
\end{enumerate}
In the special case of $W$ equal to the symmetric group $\S_n$, the conditions in (\ref{diff_eq}) are equivalent to $f(\mbf{x})$ being a solution of the system 
\begin{equation}\label{systeme_Sn}
    \begin{matrix}
      (\partial_{1}+\ldots+\partial_{n}) f(\mbf{x}) &=& 0\\[4pt]
      (\partial^2_{1}+\ldots+\partial^2_{n}) f(\mbf{x}) &=& 0\\
         &\vdots&\\
      (\partial^n_{1}+\ldots+\partial^n_{n}) f(\mbf{x}) &=& 0
  \end{matrix}
\end{equation}
Observe that the second equation corresponds to the vanishing of the laplacian of $f(\mbf{x})$. Hence solutions of (\ref{systeme_Sn}) are indeed harmonic functions in the usual sense. However, the extra conditions appearing in (\ref{systeme_Sn}) clearly make for a stronger notion. It follows that, in the symmetric group case, the polynomial $\Delta_{\S_n}$ is simply the Vandermonde determinant
   $$\Delta_{\S_n}(\mbf{x})=\prod_{i<j} (x_i-x_j).$$
The purpose of this work is to study twisted versions of system (\ref{systeme_Sn}). More precisely, instead of $\partial^k_{1}+\ldots+\partial^k_{n}$, we will rather consider operators of the form
\begin{equation}\label{operateur}
  D_k:=\sum_{i=1}^n a_{i,k}x_i \partial_{i}^{k+1} + b_{i,k} \partial_{i}^k,
\end{equation}
 with some parameters $a_{i,k}$ and $b_{i,k}$. We then consider the solution set $\mathcal{H}_{\mbf{x}}$ of the system of partial differential equations
\begin{equation}\label{harm_ab}
      D_k f(\mbf{x}) =0,\qquad k\geq 1.
  \end{equation}
The index $\mbf{x}$, in $\mathcal{H}_{\mbf{x}}$, is to underline that we are dealing with polynomials in the $n$ variables $\mbf{x}=x_1,\ldots, x_n$. Observe that  the operators  $D_k$  are homogeneous. We say that they are of degree $-k$ since they lower degree of polynopmials by $k$. It follows that $\mathcal{H}_{\mbf{x}}$ is  graded by degree, and that it has the direct sum decomposition
   $$ \mathcal{H}_{\mbf{x}}  = \bigoplus_{d\ge 0} \pi_d(\mathcal{H}_{\mbf{x}} ), $$
with $\pi_d$ denoting the projection onto the homogeneous component of degree $d$.  In particular, it makes sense to talk about the {\em Hilbert series} 
\begin{equation}\label{hilbert_series}
    H_n(t):=\sum_{d\geq 0} t^d \dim(\pi_d(\mathcal{H}_{\mbf{x}} )),
 \end{equation}
of the space ${\mathcal{H}}_{\mbf{x}}$. Clearly the right-hand side of (\ref{hilbert_series}) depends only on $n$, the number of variables, rather than on the actual set variables. It also implicitly depends on the choice of the parameters $a_{i,k}$ and $b_{i,k}$.
Recall that the Hilbert series of the space ${\mathcal{H}}_{\s_n}$, of $\S_n$-harmonic polynomials (which corresponds to setting $a_{i,k}=0$ and $b_{i,k}=1$)  is the classical $t$-analog of $n!$:
\begin{equation}\label{t_fact}
    [n]!_t:=\prod_{k=1}^n \frac{1-t^k}{1-t}= (1+t)(1+t+t^2)\cdots (1+t+\ldots+t^{n-1}).
  \end{equation}
As we will see later, this is a ``generic'' value for $H_n(t)$.

\section{Dual operators and hit-polynomials}
Before going on with our discussion, let us consider an interesting dual point of view.  Following a terminology of Wood~\cite{wood}, we shall say that a polynomial is a {\em hit-polynomial} if it can be expressed in the form
\begin{equation}\label{hit_polynomial}
    f(\mbf{x}) = \sum_{k} D^*_k\, g_k(\mbf{x}),
 \end{equation}
for some polynomials $g_k(\mbf{x})$, with $D^*_k$ standing for the  dual operator of $D_k$ with respect to the following scalar product on  the ring of polynomials.

For two polynomials $f$ and $g$ in $\R[\mbf{x}]$, one sets
\begin{equation}\label{scalar_def}
  \langle f,g\rangle := f(\partial_{\mbf{x}})g(\mbf{x})\big|_{\mbf{x}=0}.
\end{equation}
In other words, this corresponds to the constant term of the polynomial resulting from the application of the differential operator $f(\partial_{\mbf{x}})$ to $g(\mbf{x})$. A straightforward compution reveals that, for two monomials $\mbf{x}^\mbf{a}$ and $\mbf{x}^\mbf{b}$, we have
    $$\langle \mbf{x}^\mbf{a},\mbf{x}^\mbf{b}\rangle =
      \begin{cases}
      \mbf{a}!& \text{if},\ \mbf{x}^\mbf{a}=\mbf{x}^\mbf{b},\\
      0 & \text{otherwise},
\end{cases}$$
where, as is now almost usual, $\mbf{a}!$ stands for $a_1!a_2!\cdots a_n!$. This observation makes it clear that (\ref{scalar_def}) indeed defines a scalar product on $\R[\mbf{x}]$. Moreover, the dual of the operator $\partial_i^k$ is easily checked to be multiplication by $x_i^k$. It follows that
   $$
D^*_k= \sum_{i=1}^n a_{i,k}\,x_i^{k+1}\partial_{i}+b_{i,k}\,x_i^k.
$$
From general basic linear algebra principles, it follows that the space of ${\mathcal{H}}_{\mbf{x}}$, of general harmonic polynomials, is orthogonal to the space of hit-polynomials. Moreover, since the subspace of hit-polynomials is homogeneous, the corresponding quotient $\mbf{C}$ of $\R[\mbf{x}]$, by this subspace, is isomorphic to ${\mathcal{H}}_{\mbf{x}}$ as a graded space.

For a composition $\alpha=(\alpha_1,\alpha_2,\,\ldots\,,\alpha_m)$, let us set
$$
D^*_\alpha =D^*_{\alpha_1} D^*_{\alpha_2} \cdots D^*_{\alpha_m} 
$$
Note that the operator $D^*_\alpha $, as it acts on polynomials, increases degree by 
   $$|\alpha|:=a_1+a_2+\cdots+a_m.$$
 We write $\alpha\models n$, whenever $\alpha$ is a composition of $n$, i.e.: $|\alpha|=n$. We shall refer to $|\alpha|$ as the  {\em degree } of $D^*_\alpha$. Clearly partitions are special instances of compositions. Recall that  write $\lambda\vdash n$ to say that $\lambda$ is a partition of $n$. We denote $\lambda(\alpha)$ the partition obtained by  rearranging the parts of a composition $\alpha$ in decreasing size order. This given  we have

\begin{prop}\label{red_comp}
Let the operators $D^*_k$  be such that
\begin{equation}\label{condition}
     [D_k^*,D_j^*]=c_{k,j}\,D_{k+j}^*,
 \end{equation}
for some constants $c_{k,j}$.
Then, for all $d\ge 1$ the family of operators 
$$
\{D^*_\lambda\}_{\lambda\vdash d},
$$
 spans the vector space spanned by the collection
$
\{D^*_\alpha\}_{\alpha\models  d}.
$ 
\end{prop}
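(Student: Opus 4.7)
The plan is a bubble-sort argument: starting from any composition $\alpha\models d$, I want to rearrange the factors in $D^*_\alpha=D^*_{\alpha_1}\cdots D^*_{\alpha_m}$ into decreasing order (i.e.\ into a partition) by successive transpositions of adjacent factors, at the cost of producing, via the commutation relation (\ref{condition}), extra terms of strictly shorter length but the same total degree. A double induction then closes the argument.

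More precisely, I proceed by induction on the length $m$ of $\alpha$. For $m=1$ the composition $\alpha=(d)$ is already a partition, so there is nothing to prove. For the inductive step, assume the result for all compositions of $d$ of length $<m$, and consider a composition $\alpha$ of length exactly $m$. Within this length I run a secondary induction on the number $\mathrm{inv}(\alpha)$ of indices $i<j$ with $\alpha_i<\alpha_j$. If $\mathrm{inv}(\alpha)=0$ then $\alpha$ is weakly decreasing, hence a partition, and we are done. Otherwise there is an adjacent position with $\alpha_i<\alpha_{i+1}$, and rewriting $D^*_{\alpha_i}D^*_{\alpha_{i+1}}=D^*_{\alpha_{i+1}}D^*_{\alpha_i}+[D^*_{\alpha_i},D^*_{\alpha_{i+1}}]$ and invoking (\ref{condition}) gives
$$
D^*_\alpha \;=\; D^*_{\alpha'}\;+\;c_{\alpha_i,\alpha_{i+1}}\,D^*_{\beta},
$$
where $\alpha'=(\alpha_1,\ldots,\alpha_{i+1},\alpha_i,\ldots,\alpha_m)$ still has length $m$ but satisfies $\mathrm{inv}(\alpha')=\mathrm{inv}(\alpha)-1$, while $\beta=(\alpha_1,\ldots,\alpha_i+\alpha_{i+1},\ldots,\alpha_m)$ is a composition of $d$ of length $m-1$. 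By the inner induction, $D^*_{\alpha'}$ lies in the span of the $D^*_\lambda$, $\lambda\vdash d$; by the outer induction the same holds for $D^*_\beta$. Hence $D^*_\alpha$ does as well.

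The statement is that the family $\{D^*_\lambda\}_{\lambda\vdash d}$ spans the space generated by $\{D^*_\alpha\}_{\alpha\models d}$; the reverse inclusion is automatic since partitions are compositions, so the induction above delivers exactly the proposition. The main point I expect to require care is verifying that swapping a single adjacent ascent $\alpha_i<\alpha_{i+1}$ really does decrease $\mathrm{inv}$ by exactly one, so that the secondary induction is well-founded; this is a standard fact about adjacent transpositions, but it is what makes the argument terminate. Note also that the secondary induction only needs the commutator to produce \emph{some} operator of length $m-1$ and the same total degree $d$, so the particular values of the structure constants $c_{k,j}$ play no role beyond ensuring that the bracket lies in the span of the $D^*_{k+j}$.
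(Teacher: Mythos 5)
Your argument is correct and is essentially identical to the paper's proof: both perform a double induction on the number of parts and the number of inversions, using the commutation relation (\ref{condition}) to trade an adjacent ascent for a term with one fewer inversion plus a term with one fewer part. The point you flag about the inversion count dropping by exactly one under an adjacent swap is indeed the only detail needing verification, and it holds as you say.
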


\begin{proof}[\bf Proof.] 
We  show recursively that any composition $\alpha\models d$ can be expanded as\begin{equation}\label{res_red_comp}
D^*_\alpha=\sum_{ \mu} \gamma_{\alpha,\mu}\,D^*_\mu 
\end{equation}
assuming that such an  expansion exists when $\alpha$ has less than $m$ parts or less than $r$ inversions. Evidently, when $\alpha$ has $0$ inversions, or just one part, it is a partition and we have nothing to show. When $\alpha$ has $r$ inversions and $m$ parts, suppose that one of its inversions occurs at
the $i^{th }$ part of $\alpha$. This is to say that we have $\alpha_i<\alpha_{i+1}$, and we can use (\ref{condition}) in the form
$$
D^*_{\alpha_i}D^*_{\alpha_{i+1}}=D^*_{\alpha_{i+1}}D^*_{\alpha_i}+c_{\alpha_i,\alpha_{i+1}}\,D^*_{\alpha_i+\alpha_{i+1}}
$$
to rewrite $D^*_\alpha= A\, D^*_{\alpha_{i}}D^*_{\alpha_{i+1}}\, B$ as
\begin{equation}\label{reecriture}
   D^*_\alpha = A\,D^*_{\alpha_{i+1}}D^*_{\alpha_i}\, B+
     c_{\alpha_i,\alpha_{i+1}}\,A\, D^*_{\alpha_i+\alpha_{i+1}}\,B
\end{equation}
with $A:=D^*_{\alpha_1}\cdots D^*_{\alpha_{i-1}}$ and $B:=D^*_{\alpha_{i+2}}\cdots D^*_{\alpha_{m}}$.
We need only observe that the first term of the right-hand side of (\ref{reecriture}) corresponds to a composition with  less than $r$ inversions, while  the second term
has less than $m$ factors. We thus need only apply the induction hypothesis to each term to complete the proof.
\end{proof}

\section{$q$-Steenrod operators and $q$-harmonics}\label{sectionq}
For our first exploration of the spaces ${\mathcal{H}}_{\mbf{x}}$, we consider the special case corresponding to setting
    $$a_{i,k}=q,\qquad \mathrm{and}\qquad b_{i,k}=1,$$
for all $i$ and $k$. The resulting space is henceforth denoted  ${\mathcal{H}}_{\mbf{x};q}$. An element of ${\mathcal{H}}_{\mbf{x};q}$ is said to be a $q$-harmonic (polynomial). Observe that for $q=0$ we get back the more classical notion of $\S_n$-harmonic polynomials. In other words ${\mathcal{H}}_{\mbf{x};0}={\mathcal{H}}_{\s_n}$. The space  ${\mathcal{H}}_{\mbf{x};q}$ has been introduced by  Hivert and Thi\'ery (in~\cite{hivert_thiery}) as a generalisation of spaces considered by Wood in~\cite{wood}. Using the notation
 $$D_{k;q}:=\sum_{i=1}^nq\,x_i \partial_{i}^{k+1} +  \partial_{i}^k,$$
it is interesting to observe (as did Hivert and Thi\'ery) that  
 \begin{equation}\label{crochet_dq}
    [D_{k;q}, D_{j;q}]=  q(k-j) D_{k+j;q},
\end{equation}
where $[A,B]$ stands for the usual Lie bracket $AB-BA$ of operators. Indeed, this operator identity readily implies that, whenever $q\not=0$,  ${\mathcal{H}}_{\mbf{x};q}$ is simply characterized as the solution set of just the two equations:
  $$D_{1;q} f(\mbf{x})=0,\qquad {\rm and}\qquad D_{2;q}f(\mbf{x}) =0,
  $$
since all other equations, with $k>2$, follow from successive applications of (\ref{crochet_dq}), whose left-hand side involves operators with lower values of $k$.

Recall that the ring of polynomials $\R[\mbf{x}]$ can be considered as a $\S_n$-module for the action that corresponds to permutation of the variables. This action restricts to a natural $\S_n$-action on  the space ${\mathcal{H}}_{\mbf{x};q}$, since the operators $D_{k;q}$ are symmetric. It is classical that ${\mathcal{H}}_{\s_n}={\mathcal{H}}_{\mbf{x};0}$ is isomorphic, as a $\S_n$-module, to the regular representation of $\S_n$. A conjecture of  Hivert and Thi\'ery, states that this is also the case for ${\mathcal{H}}_{\mbf{x};q}$. 

\begin{conj}[Hivert-Thi\'ery]\label{wood}
   As $\S_n$-modules, the spaces ${\mathcal{H}}_{\mbf{x};q}$ is isomorphic to ${\mathcal{H}}_{\s_n}$, when  $q>0$. In particular, this implies that the Hilbert series of ${\mathcal{H}}_{\mbf{x};q}$ is $[n]!_t$.
 \end{conj}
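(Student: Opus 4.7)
The conjecture has a numerical component (equality of Hilbert series) and an equivariant component (isomorphism as $\S_n$-modules); my plan is to attack the former first and then upgrade. Because the operators $D_{k;q}$ are symmetric in the variables, they commute with the $\S_n$-action, so $\mathcal{H}_{\mbf{x};q}$ is automatically a graded $\S_n$-submodule of $\R[\mbf{x}]$. Moreover, as already noted after (\ref{crochet_dq}), the bracket relation forces $\mathcal{H}_{\mbf{x};q}=\ker D_{1;q}\cap\ker D_{2;q}$ whenever $q\neq 0$, so only two equations need to be solved. In any fixed degree $d$ the hit-space is the image of a linear map whose matrix in the monomial basis has entries polynomial in $q$; lower semi-continuity of rank then yields
\[
\dim \pi_d(\mathcal{H}_{\mbf{x};q}) \;\le\; \dim \pi_d(\mathcal{H}_{\mbf{x};0}) \;=\; [t^d]\,[n]!_t
\]
for all but finitely many $q$, which is one direction of the desired numerical identity.

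The matching lower bound is the main obstacle. The natural attack is to build an embedding $\mathcal{H}_{\s_n}\hookrightarrow\mathcal{H}_{\mbf{x};q}$ by deforming each classical harmonic $h\in\mathcal{H}_{\s_n}$ to a $q$-harmonic $h_q=h+q\,h^{(1)}+q^2\,h^{(2)}+\cdots$. Iterating the defining equations produces a cascade
\[
D_{k;0}\,h^{(j)} \;=\; F_{k,j}\bigl(h,h^{(1)},\ldots,h^{(j-1)}\bigr),\qquad k=1,2,
\]
whose solvability at each stage would require an obstruction-vanishing lemma. The appealing shortcut of simply taking derivatives of a deformed Vandermonde fails: a direct computation gives $[D_{k;q},\partial_j]=-q\,\partial_j^{k+1}$, so $\mathcal{H}_{\mbf{x};q}$ is \emph{not} stable under $\partial_j$. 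This non-equivariance is what makes the problem genuinely subtle and is plausibly why it has remained conjectural.

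Finally, to upgrade equality of Hilbert series to the claimed $\S_n$-module isomorphism, I would refine the semi-continuity argument to each isotypic component: the multiplicity of any irreducible $\S_n$-module in $\pi_d(\mathcal{H}_{\mbf{x};q})$ is upper semi-continuous in $q$, and at $q=0$ coincides with the multiplicity in the regular representation. Once total dimensions are known to agree, every multiplicity is forced to coincide as well. A possibly cleaner alternative angle on the lower bound is to observe that $L_k:=-q^{-1}D_{k;q}$ satisfy the positive Witt relation $[L_k,L_j]=(j-k)L_{k+j}$ for $k,j\ge 1$; the $q$-harmonics are then exactly the singular vectors of $\R[\mbf{x}]$ viewed as a module over this Lie algebra, so structural results for Witt/Virasoro representations might pin down the Hilbert series without producing an explicit basis.
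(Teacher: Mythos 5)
The statement you were asked to prove is not proved in the paper either: it is stated there as an open conjecture (due to Hivert--Thi\'ery), and the paper only assembles partial results around it. Your proposal is honest about this, but it does contain a genuine gap and therefore does not establish the statement. The parts you do carry out are essentially the paper's own partial results: the reduction to the two equations $D_{1;q}f=D_{2;q}f=0$ via the bracket relation (\ref{crochet_dq}) is exactly the paper's observation, and your rank-semicontinuity upper bound $\dim\pi_d(\mathcal{H}_{\mbf{x};q})\le [t^d]\,[n]!_t$ is the content of the Hivert--Thi\'ery result recorded as Theorem \ref{thm_deux} (the paper works over $\R(q)$, i.e.\ the generic case, which is what your "all but finitely many $q$" amounts to). Your final "upgrade" step is also already in the paper in a cleaner form: Theorem \ref{thm_trois} exhibits an explicit graded $\S_n$-isomorphism from $\mathcal{H}_{\mbf{x};q}$ onto the span of first terms, which is a submodule of $\mathcal{H}_{\s_n}$, so that the single equality $\dim\mathcal{H}_{\mbf{x};q}=n!$ would force the full module isomorphism (see the discussion around (\ref{dim_BHn})). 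In other words, everything you prove is the "easy half," and the entire difficulty of the conjecture is concentrated in the lower bound $\dim\mathcal{H}_{\mbf{x};q}\ge n!$, which you correctly identify as "the main obstacle" but do not resolve. Your deformation cascade $h\mapsto h+qh^{(1)}+\cdots$ is a reasonable idea (it is the reverse of the paper's Theorem \ref{thm_un}, which peels a $q$-harmonic into layers $f_0,f_1,\ldots$ with $\nabla_k f_i=-\widetilde{D}_k f_{i-1}$), but the obstruction-vanishing lemma you would need at each stage is precisely what nobody knows how to prove; asserting its plausibility is not a proof.

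Two further cautions. First, even granting the lower bound for generic $q$, the conjecture asserts the isomorphism for every $q>0$; dimensions of kernels can jump up at special parameter values, and the only uniform upper bound the paper can prove for arbitrary $q$ is $(n+1)!$, not $n!$, so the passage from "generic $q$" to "all $q>0$" is itself nontrivial and is not addressed in your plan. Second, your Witt-algebra reformulation is correct as an observation (the $L_k=-q^{-1}D_{k;q}$ do satisfy $[L_k,L_j]=(j-k)L_{k+j}$), but $\R[\mbf{x}]$ is not a highest-weight module in any standard category for which singular-vector counts are available off the shelf, so this does not by itself yield the Hilbert series. In short: the proposal correctly maps the terrain but proves only what the paper already proves, and the conjecture remains open.
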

Formula (\ref{t_fact}) is but a shadow of a finer formula describing the decomposition of each homogeneous component of ${\mathcal{H}}_{\mbf{x};q}$ into irreducible representations. As is well known, such decompositions can be expressed in a nice compact format for all graded invariant $\S_n$-modules 
   $$\mathcal{V}=\bigoplus_{d\geq 0} \mathcal{V}_d,$$
through the  {\em graded Frobenius characteristic}:
   $$\mathcal{F}_\mathcal{V}(t):=\sum_{d\geq 0} 
           \frac{1}{n!}\sum_{\sigma\in\S_n} \chi^{\mathcal{V}_d}(\sigma)
              p_{\lambda(\sigma)}.$$
 Here $\chi^{\mathcal{V}_d}$ stands for the character of the homogeneous invariant subspace $\mathcal{V}_d$, and $\lambda(\sigma)$ is the partition of $n$ corresponding to the cycle decomposition of the permutation $\sigma$. Moreover, as in Macdonald~\cite{macdonald}, $p_\lambda$ denotes the {\em power sum} symmetric function. Recall that one recovers the required graded irreducible decomposition of $\mathcal{V}$ through the expansion of $\mathcal{F}_\mathcal{V}(t)$ in the Schur function basis: 
    $$\mathcal{F}_\mathcal{V}(t)=
       \sum_{\lambda\vdash n} n_\lambda(t)\,s_\lambda.$$
The coefficient of $s_\lambda$ in $\mathcal{F}_\mathcal{V}(t)$ is the  series (or polynomial, when $\mathcal{V}$ is of finite dimension)
    $$n_\lambda(t) =\sum_{d\geq 0} n_{\lambda,d}\, t^d,$$
 such that $n_{\lambda,d}$ is the multiplicity of the irreducible representation indexed by $\lambda$ in the homogeneous component 
$\mathcal{V}_d$.

It follows from (\ref{decomp_W}) and conjecture~\ref{wood} that the graded Frobenius characteristic $F_n(t)$ of ${\mathcal{H}}_{\mbf{x};q}$ (and ${\mathcal{H}}_{\s_n}$) is
  \begin{equation}\label{regular}
    F_n(t)=[n]!_t\,(1-t)^n  \sum_{\lambda\vdash n} 
           \prod_{k=1}^{n} 
               \frac{1}{d_k!}\left(\frac{p_k}{k\,(1-t^k)}\right)^{d_k},
  \end{equation}
where $d_k=d_k(\lambda)$ is the number of size $k$ parts of $\lambda$. Thus, for $n=3$, we get
\begin{eqnarray*}
    [3]!_t(1-t)^3\left( \frac{1}{3!} \left( \frac{p_1}{1-t}\right)^3+
                \frac{p_1}{1-t}\,\frac{p_2}{2(1-t^2)} +
                \frac{p_3}{3(1-t^3)} \right)&=& \\
       && \hskip-240pt
        \frac{p_1^3}{6}(1+t)(1+t+t^2) + \frac{p_1p_2}{2}(1-t)(1+t+t^2)
        +\frac{p_3}{3}(1-t)^2(1+t)
  \end{eqnarray*}
which expands as
  $$s_3+t(1+t)s_{21}+t^3\,s_{111},$$
in term of Schur functions. In other words, the space ${\mathcal{H}}_{\s_3}$ contains one copy of the trivial representation (encoded by $s_3$)  in its $0$ degree homogeneous component, one of the sign representation (encoded by $s_{111}$)  in its degree $3$ component, and one $s_{21}$ encoded representation in both its degree $1$ and $2$ components.
          
The graded Frobenius characteristic $F_n(t)$  of the harmonics of $\S_n$  can also be written (see \cite{macdonald}) in the form
\begin{equation}\label{avec_maj}
  F_n(t)= \sum_{\lambda\vdash n}s_\lambda
       \sum_{\tau\in ST(\lambda)}t^{{\mathrm co}(\tau)}
\end{equation}
where the inner sum is over all standard Young tableaux of shape $\lambda$, and ${\mathrm co}(\tau)$ stands for the {\em cocharge} of a tableau $\tau$. If $n_\lambda$ stands for the number of standard Young tableaux of shape $\lambda$, then formula (\ref{avec_maj}) and a classical symmetric function identity, gives
\begin{eqnarray*}  
   F_n(1)&=& \sum_{\lambda\vdash n}n_\lambda\,s_\lambda\\
             &=& s_1^n.
\end{eqnarray*}
Recall that there is a natural indexing of irreducible representations of $\S_n$, by partitions $\lambda$ of $n$, such that the dimension of the irreducible representation indexed $\lambda$ is $n_\lambda$.

\section{Tilde-Harmonics and Hat-Harmonics}\label{sectiontilde}
We now consider another interesting special case that is somewhat dual to that of the last section.
Namely, we  suppose that all $b_{i,k}$'s vanish, and all $a_{i,k}$'s are equal to $1$. In formula, we consider the space of common zeros of the operators 
  $$\widetilde{D}_k:=\mbf{x}\cdot {\partial_{\mbf{x}}}^{k+1}=
     \sum_{i=1}^n x_i\,\partial_i^{k+1},$$
which is called the space of {\em tilde-harmonics}, and denoted $\widetilde{\mathcal{H}}_{\mbf{x}}$. Notice here our use of dot-product notation between the ``vectors'' $\mbf{x}$ and $\partial_\mbf{x}=(\partial_1,\ldots,\partial_n)$. It will reappear below. Just as for (\ref{crochet_dq}), we easily check that
 \begin{equation}\label{crochet_dtilde}
   [ \widetilde{D}_k, \widetilde{D}_j]= 
       (k-j) \widetilde{D}_{k+j},
\end{equation}
hence $\widetilde{\mathcal{H}}_{\mbf{x}}$ is also simply the set of common zeros of the two equations
  $$ \widetilde{D}_1  f(\mbf{x})=0,\qquad {\rm and}
     \qquad  \widetilde{D}_2  f(\mbf{x}) =0.$$
Once again, $\widetilde{\mathcal{H}}_{\mbf{x}}$ affords a natural action of the symmetric group since the operators $ \widetilde{D}_k$ are symmetric. The associated graded Frobenius characteristic is denoted $\widetilde{F}_n(t)$. 

Computer experimentations reveal a number of surprising facts about this space.  Let us denote $\widetilde{\mathcal{H}}_n(t)$ the associated Hilbert series, so that
\begin{eqnarray*}
\widetilde{\mathcal H}_2(t)&=& 1+2t+t^2+t^3,\\
\widetilde{\mathcal H}_3(t)&=& 1+3t+3t^2+4t^3+2t^4+2t^5+t^6,\\
\widetilde{\mathcal H}_4(t)&=&1+4t+6t^2+10t^3+9t^4+11t^5+9t^6+6t^7+5t^8+3t^9+t^{10}.
\end{eqnarray*} 
Specializing $t=1$, we get the global dimension of $\widetilde{\mathcal{H}}_{\mbf{x}}$ for which it is not to hard to guess, after more explicit computations, that we apparently have
  \begin{equation}\label{eq_conj_un}
     \dim \widetilde{\mathcal{H}}_{\mbf{x}}= \sum_{k=0}^n \frac{n!}{ k!}.
  \end{equation}
Of course,  the Hilbert series of  $\widetilde{\mathcal{H}}_{\mbf{x}}$ has to have some $t$-analogue of this. More experiments, and a result present below, suggest that the right candidate should be
\begin{equation}\label{conj_hilb}
   \widetilde{\mathcal H}_n(t)= \sum_{k=0}^n{n \choose k}t^k[k]_t!.
 \end{equation}
Indeed, modulo one further natural conjecture, this follows from a very explicit description of  $\widetilde{\mathcal{H}}_{\mbf{x}}$ outlined below. To state it we need one more family of operators
and yet another version of harmonic polynomials. For each $k \ge 1$ consider the operator
  \begin{equation}\label{def_dhat}
    \widehat{D}_k :=\partial_\mbf{x}^{k+1}\cdot \mbf{x}=\sum_{i=1}^n \partial_{i}^{k+1}x_i
  \end{equation}	
 which are alternatively described as
 $$\widehat{D}_k= \sum_{i=1}^n x_i \partial_{i}^{k+1} + (k+1)\, \partial_i^k.$$
Now, we introduce the space
   $$ \widehat{\mathcal{H}}_{\mbf{x}} := \big\{f(\mbf{x})\in\R[\mbf{x}]\ |\   \widehat{D}_kf(\mbf{x})= 0,\quad  \forall k\ge 1 \big\},$$ 
whose elements are said to be  ``{\em hat-harmonics}''. We will soon relate the two notions of tilde and hat harmonics. Experimentation suggest that $\widehat{\mathcal{H}}_{\mbf{x}}$  has dimension  $n!$, and that even more precisely we have the following.

\begin{conj}\label{conj_deux}
 As a graded $\S_n$-module, $\widehat {\mathcal{H}}_{\mbf{x}}$ is isomorphic to the space of $\S_n$-harmonics.
 \end{conj}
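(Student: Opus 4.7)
The plan is to construct an explicit $\S_n$-equivariant graded linear map $\Psi:\mathcal{H}_{\s_n}\to\widehat{\mathcal{H}}_{\mbf{x}}$ and show it is an isomorphism. Everything hinges on the elementary identity
\begin{equation*}
\widehat{D}_k \;=\;\widetilde{D}_k+(k+1)P_k,\qquad P_k:=\sum_i\partial_i^k,
\end{equation*}
together with its consequence $[\widehat{D}_k,\widehat{D}_j]=(k-j)\widehat{D}_{k+j}$, which drops out of (\ref{crochet_dtilde}) and the routine brackets $[\widetilde{D}_k,P_j]=-jP_{j+k}$ and $[P_k,P_j]=0$. In particular (arguing as for $q$-harmonics), $\widehat{\mathcal{H}}_{\mbf{x}}$ is cut out by just the two equations $\widehat{D}_1 f=\widehat{D}_2 f=0$.

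First I would verify that $\Delta\in\widehat{\mathcal{H}}_{\mbf{x}}$: since $P_k\Delta=0$, we have $\widehat{D}_k\Delta=\widetilde{D}_k\Delta$, and the latter is $\S_n$-antisymmetric (as $\widetilde{D}_k$ is $\S_n$-equivariant) of degree $\binom{n}{2}-k$; being strictly below the minimum degree $\binom{n}{2}$ of any nonzero antisymmetric polynomial, it must vanish. With this antisymmetric top-degree element secured, I would define $\Psi$ by deformation: for homogeneous $f\in\mathcal{H}_{\s_n}$ of degree $d$, seek a correction $\phi_f$ of degree $d$ in the ideal $I_p:=\langle p_1,p_2,\ldots\rangle$ satisfying
\begin{equation*}
\widehat{D}_k\,\phi_f \;=\; -\widetilde{D}_k f \qquad\text{for all } k\geq 1,
\end{equation*}
and set $\Psi(f):=f+\phi_f$. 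A short computation using $P_kf=0$ then gives $\widehat{D}_k\Psi(f)=0$, so $\Psi(f)\in\widehat{\mathcal{H}}_{\mbf{x}}$. Since $\Psi(f)\equiv f\pmod{I_p}$ and $\mathcal{H}_{\s_n}\cap I_p=\{0\}$, injectivity is automatic; $\S_n$-equivariance is secured by fixing $\phi_f$ canonically, e.g.\ as the unique solution orthogonal to $\widehat{\mathcal{H}}_{\mbf{x}}$ under the scalar product (\ref{scalar_def}). Surjectivity then follows by dimension count provided one proves the upper bound $\dim\widehat{\mathcal{H}}_{\mbf{x}}\leq n!$ in each degree.

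The main obstacle is the existence of $\phi_f$. The targets $\{-\widetilde{D}_k f\}_k$ already obey the required Witt-type compatibility (a routine check reducing, via $P_kf=0$, to the identity $[\widetilde{D}_j,\widetilde{D}_k]f=(j-k)\widetilde{D}_{j+k}f$), so the problem amounts to the vanishing of a first cohomology for the positive Witt algebra generated by the $\widehat{D}_k$ acting on the graded $\S_n$-module $I_p$. I would attack this recursively, solving the $k=1$ equation first and then bootstrapping, exploiting Chevalley's description $I_p=\mathcal{H}_{\s_n}\cdot\R[\mbf{x}]^{\S_n}_+$ as a free $\R[\mbf{x}]^{\S_n}$-module and tracking available degrees and multiplicities through the Hilbert series $[n]!_t$. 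The companion upper bound $\dim\widehat{\mathcal{H}}_{\mbf{x}}\leq n!$ should emerge from the same recursive machinery; alternatively, one can leverage the explicit description of $\widetilde{\mathcal{H}}_{\mbf{x}}$ promised after (\ref{conj_hilb})---combined with (\ref{conj_hilb}) itself, the natural decomposition $\widetilde{\mathcal{H}}_{\mbf{x}}=\bigoplus_S x_S\widehat{\mathcal{H}}_{\mbf{x}_S}$ would combinatorially pin down $\dim\widehat{\mathcal{H}}_{\mbf{x}}$.
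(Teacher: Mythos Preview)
This statement is a \emph{conjecture} in the paper, not a theorem: the authors offer no proof, only computer evidence and the downstream consequences (\ref{eq_conj_un}), (\ref{conj_hilb}), and the refinement of Wood's conjecture. So there is no proof in the paper to compare your proposal against.

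As for the proposal itself, it is a reasonable outline but not a proof, and the gaps you flag are exactly the substance of the conjecture. The preliminary computations are fine: the bracket identity $[\widehat{D}_k,\widehat{D}_j]=(k-j)\widehat{D}_{k+j}$ does follow as you indicate, and the argument that $\Delta\in\widehat{\mathcal{H}}_{\mbf{x}}$ is correct. But the two load-bearing steps are not carried out. First, the existence of the correction $\phi_f$: you correctly identify this as a cohomological vanishing statement for the positive Witt algebra acting on $I_p$, and then say you ``would attack this recursively'' --- but no argument is given, and there is no reason to expect this cohomology to vanish without doing the work. This is precisely where the difficulty of the conjecture lives. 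Second, the upper bound $\dim\widehat{\mathcal{H}}_{\mbf{x}}\le n!$: your proposed alternative route through (\ref{conj_hilb}) is circular, since in the paper (\ref{conj_hilb}) is \emph{deduced from} Conjecture~\ref{conj_deux} via Theorem~\ref{thm_quatre}, not established independently. The decomposition $\widetilde{\mathcal{H}}_{\mbf{x}}=\bigoplus_{\mbf{y}} e_{\mbf{y}}\widehat{\mathcal{H}}_{\mbf{y}}$ relates the tilde and hat spaces but gives no a priori bound on either.

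In short: your framework is sensible and the easy parts check out, but what remains is exactly the open problem.
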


Now, for any given $k$-subset $\mbf{y}$ of the $n$ variables $\mbf{x}$,  let us consider the space $\widehat{\BH}_{\mbf{y}}$, and write
     $$e_\mbf{y}:=\prod_{x\in \mbf{y}} x,$$
for the elementary symmetric polynomial of degree $k$ in the variables $\mbf{y}$. As usual, we define the {\em support} of a monomial to be the set of variable that appear in it, with non-zero exponent. Clearly, $\mbf{y}^\mbf{a}$ has support $\mbf{y}$ if and only if $\mbf{y}^\mbf{a}=e_\mbf{y}\, \mbf{y}^\mbf{b}$, for some $\mbf{b}$.
We can now state the following remarkable fact.

\begin{theo}\label{thm_quatre}
 The space of tilde-harmonics has the direct sum decomposition
 \begin{equation}\label{decomp_star}
         \widetilde{\mathcal{H}}_{\mbf{x}}=\bigoplus_{\mbf{y}\subseteq \mbf{x} } e_\mbf{y}\widehat {\mathcal{H}}_{\mbf{y}},
  \end{equation}
if we consider that hat-harmonics for $\mbf{y}=\emptyset$ are simply the scalars.
\end{theo}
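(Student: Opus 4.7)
The strategy is to exploit the natural direct sum decomposition of $\R[\mbf{x}]$ by monomial support. For each $\mbf{y}\subseteq\mbf{x}$, let $V_\mbf{y}$ be the linear span of monomials whose support equals $\mbf{y}$; a monomial in the variables $\mbf{y}$ has full support $\mbf{y}$ precisely when every $x\in\mbf{y}$ occurs with positive exponent, i.e.\ when it is divisible by $e_\mbf{y}$. This gives $V_\mbf{y}=e_\mbf{y}\,\R[\mbf{y}]$ and
\[
\R[\mbf{x}] \;=\; \bigoplus_{\mbf{y}\subseteq\mbf{x}} e_\mbf{y}\,\R[\mbf{y}].
\]
Since each $\widetilde{D}_k$ is linear and homogeneous, if I can show it preserves every summand then its common kernel will automatically decompose along the same lines.

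The heart of the argument is the identity
\[
\widetilde{D}_k(e_\mbf{y}\,g) \;=\; e_\mbf{y}\,\widehat{D}_k^{\mbf{y}}\,g, \qquad g\in\R[\mbf{y}],
\]
where $\widehat{D}_k^{\mbf{y}}$ denotes the hat-operator built from the variables in $\mbf{y}$. To prove it, I would first note that the terms $x_i\,\partial_i^{k+1}(e_\mbf{y}\,g)$ with $x_i\notin\mbf{y}$ vanish, since $\partial_i$ kills both $e_\mbf{y}$ and $g$. For each $x_i\in\mbf{y}$, I factor $e_\mbf{y}=x_i\,e_{\mbf{y}\setminus\{x_i\}}$, pull the $x_i$-free factor $e_{\mbf{y}\setminus\{x_i\}}$ out of $\partial_i^{k+1}$, and apply Leibniz in the form
\[
\partial_i^{k+1}(x_i\,g) \;=\; x_i\,\partial_i^{k+1}g + (k+1)\,\partial_i^k g,
\]
which uses only $\partial_i x_i = 1$ and the vanishing of all higher derivatives of $x_i$. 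Multiplying through by $x_i$ reassembles the factor $e_\mbf{y}$, and summing over $x_i\in\mbf{y}$ produces exactly $e_\mbf{y}\,\widehat{D}_k^{\mbf{y}}\,g$.

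Once this identity is in hand, each $\widetilde{D}_k$ acts on $e_\mbf{y}\,\R[\mbf{y}]$ as the operator $\widehat{D}_k^{\mbf{y}}$, after dividing out the nonzero multiplier $e_\mbf{y}$; in particular $\widetilde{D}_k$ preserves the summand. Intersecting the kernels over all $k\ge 1$ then yields
\[
\widetilde{\mathcal{H}}_\mbf{x} \;=\; \bigoplus_{\mbf{y}\subseteq\mbf{x}} e_\mbf{y}\,\widehat{\mathcal{H}}_\mbf{y},
\]
with the stated convention $\widehat{\mathcal{H}}_\emptyset=\R$ accounting for the constants (the summand $\mbf{y}=\emptyset$ of $\R[\mbf{x}]$). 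I do not foresee a serious obstacle: the proof is essentially combinatorial bookkeeping of monomial support combined with one clean Leibniz computation. The only point requiring care is verifying that the factor $e_\mbf{y}$ propagates correctly through $\partial_i^{k+1}$, and that is exactly what the Leibniz identity above delivers, converting $\widetilde{D}_k$ (which has no $\partial_i^k$ term) into $\widehat{D}_k^{\mbf{y}}$ (which does) via the extra derivative landing on $x_i$.
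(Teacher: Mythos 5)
Your proposal is correct and follows essentially the same route as the paper's own proof: decompose $\R[\mbf{x}]$ by monomial support into the summands $e_\mbf{y}\,\R[\mbf{y}]$ and use the conjugation identity $\widetilde{D}_k\,e_\mbf{y}=e_\mbf{y}\,\widehat{D}_k$ restricted to each variable subset (the paper's identity (\ref{eq_prop_un})), which you moreover verify explicitly via the Leibniz computation the paper leaves as ``easily checked.'' No gap; nothing further is needed.
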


\begin{proof}[\bf Proof.]  Let us first observe that  any polynomial $f$ in $\R[\mbf{x}]$ decomposes uniquely in the form
    \begin{equation}\label{decomp_support}
          f =  \sum_{\mbf{y}\subseteq \mbf{x}}e_\mbf{y}f_\mbf{y}
  \end{equation}
with $f_{\mbf{y}}$ in $\R[\mbf{y}]$. This decomposition is simply obtained by collecting terms with respect to support. If all $f_{\mbf{y}}$ are chosen to respectively lie in $\widehat {\mathcal{H}}_{\mbf{y}}$, then $f$ is in $\widetilde{\mathcal{H}}_{\mbf{x}}$. Indeed,
we can easily check the operator identity
\begin{equation}\label{eq_prop_un}
         \widetilde{D}_k \,e_\mbf{x} = e_\mbf{x}\,\widehat{D}_k,
  \end{equation}
 where $e_{\mbf{x}}$ stands for the operator of multiplication by $e_{\mbf{x}}$.
Observe that, for all $k$,  we get
\begin{eqnarray*} 
(\mbf{x}\cdot {\partial_{\mbf{x}}}^{k+1})\,f&=&
 \sum_{\mbf{y}\subseteq \mbf{x}} (\mbf{y}\cdot {\partial_{\mbf{y}}}^{k+1})\,e_\mbf{y}f_\mbf{y} \\ 
&=& \sum_{\mbf{y}\subseteq \mbf{x}} e_\mbf{y}\,({\partial_{\mbf{y}}}^{k+1}\cdot \mbf{y})\,f_\mbf{y}.
\end{eqnarray*}
Hence $f$ is in $\widetilde{\mathcal{H}}_{\mbf{x}}$, if and only if each $f_\mbf{y}$ lies in $\widehat {\mathcal{H}}_{\mbf{y}}$. In other words, we get
\begin{equation}\label{inclusion}
 \bigoplus_{\mbf{y}\subseteq \mbf{x}} e_\mbf{y}\widehat {\mathcal{H}}_{\mbf{y}}\ =\  \widetilde{\mathcal{H}}_{\mbf{x}},
\end{equation}
thus finishing the proof.
\end{proof}
Note that this argument goes through, exactly in the same maner, if we consider the more general case of operators  $a_k\,\widetilde{D}_k$ and $a_k\,\widehat{D}_k$, with the $a_k$'s equal to $0$ or $1$. The intent here is to restrict the set of equations considered to those $k$ for which $a_k$ takes the value $1$.
The corresponding spaces are denoted $\widetilde{\mathcal{H}}_{\mbf{x}}^{\mbf{a}}$ and $\widehat{\BH}_{\mbf{x}}^{\mbf{a}}$, with similar convention for the corresponding Hilbert series and graded Frobenius characteristics. It follows that, even in this more general context, we have

\begin{coro} For all choices of $a_k$, 
\begin{equation}
   \widetilde{\mathcal H}_n^{\mbf{a}}(t)= \sum_{k=0}^n{n \choose k}t^k\,
        \widehat{\mathcal H}_k^{\mbf{a}}(t).
 \end{equation}    
\end{coro}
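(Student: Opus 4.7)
The plan is essentially to take Hilbert series in the generalized version of Theorem~\ref{thm_quatre} and count by size of subset.

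First, I would invoke the remark immediately preceding the corollary: the proof of Theorem~\ref{thm_quatre} carries over verbatim when all operators $\widetilde{D}_k$ and $\widehat{D}_k$ are replaced by $a_k\widetilde{D}_k$ and $a_k\widehat{D}_k$, because the key operator identity $\widetilde{D}_k e_\mbf{x} = e_\mbf{x}\widehat{D}_k$ is multiplied by the same scalar $a_k$ on both sides. This yields the direct sum decomposition
\begin{equation*}
  \widetilde{\mathcal{H}}_{\mbf{x}}^{\mbf{a}}
  \;=\; \bigoplus_{\mbf{y}\subseteq\mbf{x}} e_\mbf{y}\,\widehat{\mathcal{H}}_{\mbf{y}}^{\mbf{a}},
\end{equation*}
with the convention that $\widehat{\mathcal{H}}_{\emptyset}^{\mbf{a}}=\R$.

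Next I would take Hilbert series term by term. Since $\R[\mbf{x}]$ is an integral domain, multiplication by the nonzero homogeneous polynomial $e_\mbf{y}$ of degree $|\mbf{y}|$ is an injective graded map that shifts degrees by $|\mbf{y}|$. Therefore the Hilbert series of the summand $e_\mbf{y}\,\widehat{\mathcal{H}}_{\mbf{y}}^{\mbf{a}}$ equals $t^{|\mbf{y}|}\,\widehat{\mathcal H}_{\mbf{y}}^{\mbf{a}}(t)$. Moreover, as noted when $\widetilde{\mathcal H}_n^{\mbf{a}}(t)$ was defined, the Hilbert series of $\widehat{\mathcal{H}}_{\mbf{y}}^{\mbf{a}}$ depends only on the cardinality $k=|\mbf{y}|$, not on the specific subset of variables, so it equals $\widehat{\mathcal H}_k^{\mbf{a}}(t)$.

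Finally, I would group the subsets by size. Since there are exactly $\binom{n}{k}$ subsets $\mbf{y}\subseteq\mbf{x}$ with $|\mbf{y}|=k$, the direct sum yields
\begin{equation*}
  \widetilde{\mathcal H}_n^{\mbf{a}}(t)
  \;=\; \sum_{\mbf{y}\subseteq\mbf{x}} t^{|\mbf{y}|}\,\widehat{\mathcal H}_{|\mbf{y}|}^{\mbf{a}}(t)
  \;=\; \sum_{k=0}^n \binom{n}{k}\,t^k\,\widehat{\mathcal H}_k^{\mbf{a}}(t),
\end{equation*}
where the $k=0$ term contributes the scalars as required. There is no real obstacle here; the only point deserving a moment's care is verifying that the generalized form of Theorem~\ref{thm_quatre} still holds, and that the identification $\widehat{\mathcal H}_{\mbf{y}}^{\mbf{a}}(t)=\widehat{\mathcal H}_k^{\mbf{a}}(t)$ is legitimate because the operators $\widehat{D}_k$ are symmetric in the variables and so renaming the $k$-element variable set $\mbf{y}$ yields an isomorphic graded space.
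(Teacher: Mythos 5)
Your proposal is correct and follows exactly the route the paper intends: the corollary is stated there as an immediate consequence of the decomposition of Theorem~\ref{thm_quatre} (generalized via the remark about the operators $a_k\widetilde{D}_k$ and $a_k\widehat{D}_k$), with the Hilbert series read off summand by summand. Your additional observations — that multiplication by $e_{\mbf{y}}$ shifts degree by $|\mbf{y}|$ and that $\widehat{\mathcal H}_{\mbf{y}}^{\mbf{a}}(t)$ depends only on $|\mbf{y}|$ by symmetry of the operators — are precisely the details the paper leaves implicit.
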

In particular, if conjecture~\ref{conj_deux}  holds then (\ref{conj_hilb}) holds, and this immediately implies (\ref{eq_conj_un}).
There is an even finer corollary of Theorem~\ref{thm_quatre}. 

\begin{coro}\label{prop_deux}
The graded Frobenius characteristic of $\widetilde{\mathcal{H}}^\mbf{a}_{\mbf{x}}$ is given by 
the symmetric function
 \begin{equation}\label{frobenius_star}  
    \widetilde{F}_n^\mbf{a}(t)= \sum_{k=0}^n t^k \widehat{F}_k^\mbf{a}(t)\, h_{n-k}(\mbf{z})
 \end{equation}
\end{coro}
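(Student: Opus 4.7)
My plan is to combine the direct sum decomposition of Theorem~\ref{thm_quatre} (which the remark preceding the corollary tells us also holds in the $\mbf{a}$-variant form) with the Frobenius characteristic formula for induced representations from Young subgroups. The key observation is that the direct sum in (\ref{decomp_star}) is naturally indexed by subsets $\mbf{y}\subseteq \mbf{x}$, which the symmetric group $\S_n$ permutes transitively within each fixed cardinality class.

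First I would regroup the sum by $k=|\mbf{y}|$, writing
\begin{equation*}
\widetilde{\mathcal{H}}_{\mbf{x}}^\mbf{a}=\bigoplus_{k=0}^n\ \bigoplus_{\substack{\mbf{y}\subseteq \mbf{x}\\ |\mbf{y}|=k}} e_\mbf{y}\,\widehat{\mathcal{H}}_{\mbf{y}}^\mbf{a}.
\end{equation*}
Fix a size-$k$ subset $\mbf{y}_0\subseteq\mbf{x}$. Its $\S_n$-stabilizer is the Young subgroup $\S_k\times \S_{n-k}$, where the first factor permutes the variables in $\mbf{y}_0$ and the second factor permutes the complementary variables. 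The inner sum is therefore obtained by inducing the $\S_k\times \S_{n-k}$-module $e_{\mbf{y}_0}\widehat{\mathcal{H}}_{\mbf{y}_0}^\mbf{a}$ up to $\S_n$. Since the polynomials in $e_{\mbf{y}_0}\widehat{\mathcal{H}}_{\mbf{y}_0}^\mbf{a}$ only involve the variables in $\mbf{y}_0$, the second factor $\S_{n-k}$ acts trivially.

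Next I would compute the graded Frobenius characteristic of this inducing module. Multiplication by $e_{\mbf{y}_0}$ is an $\S_k$-equivariant isomorphism $\widehat{\mathcal{H}}_{\mbf{y}_0}^\mbf{a}\to e_{\mbf{y}_0}\widehat{\mathcal{H}}_{\mbf{y}_0}^\mbf{a}$ that shifts degree by $k$, so the $\S_k$-graded Frobenius characteristic of $e_{\mbf{y}_0}\widehat{\mathcal{H}}_{\mbf{y}_0}^\mbf{a}$ is $t^k\,\widehat{F}_k^\mbf{a}(t)$. The trivial $\S_{n-k}$-module has Frobenius characteristic $h_{n-k}(\mbf{z})$. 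By the standard formula relating induction from a Young subgroup to the product of symmetric functions, the graded Frobenius characteristic of the induced module is
\begin{equation*}
t^k\,\widehat{F}_k^\mbf{a}(t)\cdot h_{n-k}(\mbf{z}).
\end{equation*}
Summing over $k$ gives (\ref{frobenius_star}).

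The only step that needs genuine care is verifying that the decomposition in Theorem~\ref{thm_quatre} is actually $\S_n$-equivariant in the sense required, i.e.\ that $\sigma\cdot(e_\mbf{y}\widehat{\mathcal{H}}_{\mbf{y}}^\mbf{a})=e_{\sigma(\mbf{y})}\widehat{\mathcal{H}}_{\sigma(\mbf{y})}^\mbf{a}$; this is immediate from $\sigma\cdot e_\mbf{y}=e_{\sigma(\mbf{y})}$ and the fact that the operators $\widehat{D}_k$ are symmetric in $\mbf{y}$, so $\sigma$ sends $\widehat{\mathcal{H}}_{\mbf{y}}^\mbf{a}$ isomorphically to $\widehat{\mathcal{H}}_{\sigma(\mbf{y})}^\mbf{a}$. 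With this equivariance in hand, the identification with an induced representation is routine and no real obstacle remains.
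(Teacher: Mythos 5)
Your proposal is correct and follows essentially the same route as the paper: both arguments rest on Theorem~\ref{thm_quatre} together with the fact that inducing a module from the Young subgroup $\S_k\times\S_{n-k}$ (trivial on the second factor) multiplies the Frobenius characteristic by $h_{n-k}$. Your write-up is somewhat more explicit than the paper's --- in particular you spell out the degree shift by $k$ coming from multiplication by $e_\mbf{y}$, which is where the factor $t^k$ in (\ref{frobenius_star}) originates, and you check the $\S_n$-equivariance of the decomposition --- but the underlying argument is the same.
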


\begin{proof}[\bf Proof.]
Let $\mbf{x}=\mbf{y}+\mbf{z}$ (disjoint union), and denote $\S_\mbf{x}$ the group of permutation of the variables $\mbf{x}$. Recall that, if $f(\mbf{y})$ is a polynomial which under the action of $\S_\mbf{y}$  generates an irreducible module with character $\chi^\lambda$ (for some $\lambda\vdash k$), then under the action of $\S_\mbf{x}$ it will generate an $\S_\mbf{x}$-module whose  character is obtained by inducing, from  $\S_\mbf{y}\times \S_\mbf{z}$ to $\S_\mbf{x}$,  the product of $\chi^\lambda$ by the trivial character
of $\S_\mbf{z}$. In particular it follows that the Frobenius characteristic of the
$\S_\mbf{x}$-module generated by $f(\mbf{y})$ will simply be $s_\lambda\,h_{n-k}$. It follows that the Frobenius characteristic of the direct sum
$$
 \bigoplus_{\mbf{y}\subseteq
 \mbf{x} } e_\mbf{y}\widehat {\mathcal{H}}_{\mbf{y}}^\mbf{a}
$$
is given by the polynomial in (\ref{frobenius_star}). 
\end{proof}
Observe that conjecture~\ref{conj_deux} implies that $\widehat{F}_k(t)$ is equal to $F_k(t)$, the graded Frobenius characteristic of the harmonics of $\S_n$ given by formula (\ref{avec_maj}) (or equivalently (\ref{regular})). 

A conjecture of Wood~\cite[conjecture 7.3]{wood} is thus partially addressed. It states that a part of the quotient $\widetilde{\mbf{C}}$ of the ring $\R[\mbf{x}]$, by the subspace of hit-polynomials for the operators $\widetilde{D}_k$, affords as a basis the (equivalence classes of)  monomials 
$\mbf{x}^\mbf{a}$, with $\mbf{a} =(a_1,\ldots,a_n)$ such that $ 1\leq a_i\leq i$. In fact, our discussion suggests that the full basis is given as the set of monomials 
$\mbf{y}^\mbf{a}$, with $\mbf{a} =(a_1,\ldots,a_k)$ such that $ 1\leq a_i\leq i$. 
Here $\mbf{y}$ varies in all $k$-subsets of $\mbf{x}$ (with induce order on variables), and $k$ goes from $0$ to $n$. Thus, for $n=3$ we would get the basis
$$\begin{array}{lll} 
  1, x_1,x_2,x_3, x_1x_2,x_1x_2^2, x_1x_3,x_1x_3^2,  x_2x_3,x_2x_3^2,\\
    x_1x_2x_3,x_1x_2^2x_3,x_1x_2x_3^2,
    x_1x_2^2x_3^2,x_1x_2x_3^3,x_1x_2^2x_3^3.
 \end{array}$$
 In view of Theorem~\ref{thm_quatre}, Wood's conjecture is a consequence of Conjecture~\ref{conj_deux} and the fact that $\widetilde{\mbf{C}}$ is isomorphic to $\widetilde{\mathcal{H}}_{\mbf{x}}$ as a graded $\S_n$-module.

\section{More on $q$-harmonics}
 We will now link the study of harmonics of the $\widetilde{D}_k$ to further our understanding of the common zeros of the operators $D_{k:q}$, in the case when $q$ is considered as a formal parameter. Our point of departure is the following important fact. Let us denote
$\nabla_k$ the generalized Laplacian
    $$\nabla_k:=\sum_{i=1}^n \partial_i^k,$$
thus we have
    $$D_{k:q}=q\widetilde{D}_k+\nabla_k.$$
Then

\begin{theo}\label{thm_un} Up to a power of $q$, every $q$-harmonic polynomial $f$  may be written  in the form
 \begin{equation}\label{harm_series}
      f= f_0+q f_1+q^2 f_2+\cdots + q^m f_m
 \end{equation}
 with $ f_i\in \R[\mbf{x}]$, and such that for all $k\ge 1$ we have
 \begin{equation}\label{harm_conds}
     \begin{array}{ll}
     {\rm (a)}\  \nabla_k f_0=0\\[8pt]
     {\rm (b)}\   \nabla_k f_i= - \widetilde{D}_kf_{i-1}, \qquad{\rm for\ all}\quad   i=2,\ldots, m-1,\\[8pt]
     {\rm (c)}\  \widetilde{D}_kf_{m}=0.
\end{array}
 \end{equation}
In particular, it follows that for any $r\ge 0$, and any choice of $k_1,k_2,\,\ldots\,,k_r\ge 1 $, the element 
 \begin{equation}\label{harm_condbis}
    \nabla_{k_1}\nabla_{k_2}\cdots\nabla_{k_r}f_r
 \end{equation}
is a  $\S_n$-harmonic polynomial in the usual sense.
\end{theo}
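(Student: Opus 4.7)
The argument splits naturally into two pieces. For \eqref{harm_series}--\eqref{harm_conds}, after multiplying $f$ through by a suitable power of $q$ one may assume $f = \sum_{i=0}^m q^i f_i$ with each $f_i \in \R[\mbf{x}]$. Using $D_{k;q} = \nabla_k + q\,\widetilde{D}_k$, expansion gives
\[
  D_{k;q} f \;=\; \nabla_k f_0 \;+\; \sum_{i=1}^m q^i\bigl(\nabla_k f_i + \widetilde{D}_k f_{i-1}\bigr) \;+\; q^{m+1}\,\widetilde{D}_k f_m,
\]
and vanishing of this polynomial in $q$ forces each coefficient to vanish, giving conditions (a), (b), (c) for every $k \ge 1$.

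The entire burden of the second assertion rests on the elementary commutation identity
\[
  [\,\nabla_j,\,\widetilde{D}_k\,] \;=\; j\,\nabla_{j+k},
\]
a one-line consequence of $\partial_i^{\,j} x_i - x_i\partial_i^{\,j} = j\,\partial_i^{\,j-1}$. With it I would first establish, by induction on $s$, that every expression $\widetilde{D}_{\ell_1}\cdots\widetilde{D}_{\ell_s} f_0$ is $\S_n$-harmonic. The base case $s = 0$ is exactly (a), and the inductive step is
\[
  \nabla_j\bigl(\widetilde{D}_{\ell_1}\cdots\widetilde{D}_{\ell_s} f_0\bigr) \;=\; \widetilde{D}_{\ell_1}\bigl(\nabla_j\widetilde{D}_{\ell_2}\cdots\widetilde{D}_{\ell_s} f_0\bigr) + j\,\nabla_{j+\ell_1}\bigl(\widetilde{D}_{\ell_2}\cdots\widetilde{D}_{\ell_s} f_0\bigr),
\]
both summands being zero by the induction hypothesis applied at length $s-1$.

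It remains to express $\nabla_{k_1}\cdots\nabla_{k_r} f_r$ as a $\Z$-linear combination of such terms. I would prove by induction on $i = 0, 1, \ldots, r$ the stronger statement that $\nabla_{k_{r-i+1}}\cdots\nabla_{k_r} f_r$ is a linear combination of expressions of the form $\widetilde{D}_{\ell_1}\cdots\widetilde{D}_{\ell_t} f_{r-i}$. In the inductive step, apply the next operator $\nabla_{k_{r-i}}$ to each summand and commute it through the accumulated string of $\widetilde{D}$'s using the identity above; each resulting term has a single $\nabla$ acting directly on $f_{r-i}$, which is then replaced by $-\widetilde{D}$ applied to $f_{r-i-1}$ via (b). Taking $i = r$ yields $\nabla_{k_1}\cdots\nabla_{k_r} f_r$ as a sum of $\widetilde{D}$-strings on $f_0$, and the preliminary claim finishes the job.

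The principal obstacle is not conceptual but organizational: one must track carefully, at each commutation, which $\nabla$'s become absorbed by (b) into the next $f_i$ down and which are produced as extra $\nabla$'s (and eventually extra $\widetilde{D}$'s) via the commutation identity. Once this bookkeeping is laid out, everything is forced by the single relation $[\nabla_j,\widetilde{D}_k] = j\,\nabla_{j+k}$ together with the recurrence (b).
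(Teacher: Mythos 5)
Your proof is correct. The first half (expanding in powers of $q$ and equating coefficients of $q^i$ in $D_{k;q}f=0$) is exactly the paper's argument; note that your version of condition (b), valid for all $i=1,\ldots,m$, is the correct one --- the range $i=2,\ldots,m-1$ printed in the statement is a typo. For the second half you rely on the same key identity $[\nabla_j,\widetilde{D}_k]=j\,\nabla_{j+k}$ and on the recurrence (b), but you organize the induction differently. The paper inducts directly on $r$: assuming every length-$(r-1)$ string $\nabla_{k_2}\cdots\nabla_{k_r}f_{r-1}$ is harmonic, it commutes a single $\widetilde{D}_\ell$ leftward through $\nabla_{k_1}\cdots\nabla_{k_r}$; each commutator correction term is a length-$r$ $\nabla$-string applied to $f_{r-1}$, hence vanishes by the inductive hypothesis, so $\nabla_{k_1}\cdots\nabla_{k_r}\nabla_\ell f_r=-\nabla_{k_1}\cdots\nabla_{k_r}\widetilde{D}_\ell f_{r-1}=-\widetilde{D}_\ell\,\nabla_{k_1}\cdots\nabla_{k_r}f_{r-1}=0$, with no multi-term bookkeeping because the corrections die on the spot. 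Your route instead pushes everything down to $f_0$: you first show that all $\widetilde{D}$-strings applied to $f_0$ are harmonic, then express $\nabla_{k_1}\cdots\nabla_{k_r}f_r$ as an integer linear combination of such strings by alternately commuting and invoking (b). This costs the organizational care you acknowledge, but it buys a genuinely stronger conclusion: the elements (\ref{harm_condbis}) are exhibited explicitly as lying in the span of $\{\widetilde{D}_{\ell_1}\cdots\widetilde{D}_{\ell_t}f_0\}$, i.e.\ in the closure of the classical harmonic polynomial $f_0$ under the operators $\widetilde{D}_\ell$, rather than merely being shown harmonic. Both arguments are sound; the paper's is the more economical, yours the more informative.
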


\begin{proof}[\bf Proof.]
Clearly (\ref{harm_series}) can be obtained by expanding the given element in powers of $q$. The identities in (\ref{harm_conds}) are then  immediately obtained by equating powers of $q$ in the identity
          $$\nabla_k f= -q\, \widetilde{D}_kf  $$
In particular we see that $f_0$ must necessarily be harmonic, and this shows the case $r=0$ of (\ref{harm_condbis}). We  can thus proceed by induction
and assume that (\ref{harm_condbis}) is harmonic  for $r-1$. This given, it follows that, for any $k_1,k_2,\ldots, k_{r }$ and $k\ge 1$, we  have
  \begin{eqnarray*}
      A\,[\nabla_{k_s},\widetilde{D}_\ell]\,B\,f_{r-1}&=&  k_s \,A\,\nabla_{\ell+k_s}\,B\,f_{r-1} \\ &=& 0
  \end{eqnarray*}
with $A$ standing for the operator  $\nabla_{k_1} \cdots\nabla_{k_{s-1}}$, and $B$ for $\nabla_{k_{s+1}} \cdots\nabla_{k_{r }}$.
This may further be rewritten as
    $$ A\,\nabla_{k_s} \widetilde{D}_\ell \,B\,f_{r-1}= A\,\widetilde{D}_\ell \nabla_{k_s} \,B\,f_{r-1} $$
which implies that
\begin{equation}\label{commute}
  \nabla_{k_1}\nabla_{k_2}  \cdots\nabla_{k_{r}}\widetilde{D}_\ell f_{r-1}= 
       \widetilde{D}_\ell\nabla_{k_1}\nabla_{k_2}\cdots \nabla_{k_{r}}f_{r-1} 
\end{equation}
But the inductive hypothesis gives
$$
\widetilde{D}_\ell\nabla_{k_1}\nabla_{k_2}\cdots  \nabla_{k_{r}}f_{r-1}= 0
$$
and thus (\ref{commute}) combined with (\ref{harm_conds}) becomes
   $$ -\nabla_{k_1}\nabla_{k_2}\cdots \nabla_{k_{r}}\nabla_\ell f_{r}= 0 $$
this completes the induction and our proof. 
\end{proof}

This brings us in a position to   recall   the following basic result of Hivert-Thi\'ery

\begin{theo}[Hivert--Thi\'ery]\label{thm_deux}
  Let $\BK$ denote the field $\R(q)$ of rational fractions in $q$, and set
 \begin{equation}\label{ideal_sym}
    {\mathcal I}:=(e_1,e_2,\,\ldots\,,e_n)_{\BK[\mbf{x} ]}  
  \end{equation}
The vectors spaces $ {\mathcal{H}}_{\mbf{x}:q} $ and ${\mathcal I}$ are complementary.
 Therefore   the
 Hilbert series of   ${\mathcal{H}}_{\mbf{x}:q} $ is a sub-polynomial of the Hilbert series of  $\S_n$-Harmonics with coefficients in $\BK$. That is
 \begin{equation}\label{sub_harm}
      \sum_{d=0}\dim \pi_d({\mathcal{H}}_{\mbf{x}:q}) t^m \ll [n]_t!,
  \end{equation}
with ``$\ll$'' denoting coefficient wise inequality.  
\end{theo}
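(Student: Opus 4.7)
The plan is to establish the direct-sum decomposition $\BK[\mbf{x}] = \mathcal{H}_{\mbf{x}:q} \oplus \mathcal{I}$; the inequality (\ref{sub_harm}) is then immediate, since by the classical Chevalley theorem (applied over the ground field $\BK$) the quotient $\BK[\mbf{x}]/\mathcal{I}$ has Hilbert series exactly $[n]_t!$. I would verify this decomposition degree by degree, by proving separately (a) the intersection $\mathcal{H}_{\mbf{x}:q} \cap \mathcal{I} = \{0\}$ and (b) the sum $\mathcal{H}_{\mbf{x}:q} + \mathcal{I} = \BK[\mbf{x}]$.

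For (a), Theorem~\ref{thm_un} is the key input. Suppose, toward a contradiction, that $0 \neq f \in \mathcal{H}_{\mbf{x}:q} \cap \mathcal{I}$. By Theorem~\ref{thm_un}, after multiplying by a suitable power of $q$ we may write $q^N f = f_0 + q f_1 + \cdots + q^m f_m$ with $f_i \in \R[\mbf{x}]$, $f_0 \neq 0$, and $f_0$ a classical $\S_n$-harmonic. Now, the sequence $e_1,\ldots,e_n$ is regular in $\R[\mbf{x}]$ and involves no $q$, so it is still regular in $\R[q,\mbf{x}]$; the quotient $\R[q,\mbf{x}]/(e_1,\ldots,e_n)$ is therefore a free $\R[q]$-module, and in particular $\R[q]$-torsion-free. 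This torsion-freeness implies $(e_1,\ldots,e_n)_{\R[q,\mbf{x}]} = \mathcal{I} \cap \R[q,\mbf{x}]$: any element of $\R[q,\mbf{x}] \cap \mathcal{I}$ can be written as $\sum e_k h_k$ with $h_k \in \BK[\mbf{x}]$, and clearing $q$-denominators shows that a nonzero $d(q) \in \R[q]$ annihilates its class in the quotient, forcing the class itself to vanish. Applied to $q^N f$, this yields $q^N f = \sum_k e_k G_k$ with $G_k \in \R[q,\mbf{x}]$, and extracting the $q^0$-coefficient gives $f_0 \in (e_1,\ldots,e_n)_{\R[\mbf{x}]}$, contradicting $f_0 \in \mathcal{H}_{\s_n} \setminus \{0\}$ via the classical Chevalley decomposition.

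For (b), equivalently the surjectivity of the natural map $\pi\colon \mathcal{H}_{\mbf{x}:q} \to \BK[\mbf{x}]/\mathcal{I}$, I would construct a $q$-harmonic lift of every classical harmonic. Given $h_0 \in \mathcal{H}_{\s_n} \otimes_\R \BK$, seek $h = h_0 + q h_1 + q^2 h_2 + \cdots$ with $h_i$ to be determined. Expanding $(\nabla_k + q\widetilde{D}_k)h = 0$ in powers of $q$ yields exactly the recursion $\nabla_k h_i = -\widetilde{D}_k h_{i-1}$ from (\ref{harm_conds}). Compatibility across different values of $k$ follows from the commutators $[\nabla_j,\widetilde{D}_k] = j\nabla_{j+k}$ and $[\widetilde{D}_k,\widetilde{D}_j] = (k-j)\widetilde{D}_{k+j}$ together with the inductive form of the preceding equations, in parallel with the proof of Theorem~\ref{thm_un}.

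The main obstacle is to show that this recursion does produce an actual element of $\BK[\mbf{x}]$: one must choose the $h_i$ coherently inside $\mathcal{I}$ so that all the equations (for every $k$) are satisfied simultaneously, and so that after clearing $q$-denominators the series collapses to a genuine polynomial. Because each $h_i$ must have degree equal to $\deg h_0$, the entire construction lives in a finite-dimensional $\BK$-subspace, which reduces the question to a linear-algebra statement about the joint image of the operators $(\nabla_k)_{k \geq 1}$ on $\mathcal{I}$. An alternative route, avoiding the explicit construction, is a dimension count: part (a) already gives $\dim_\BK \mathcal{H}_{\mbf{x}:q,d} \leq \dim_\BK (\BK[\mbf{x}]/\mathcal{I})_d$ in every degree $d$, and producing enough explicit $q$-harmonic lifts of, say, the Artin basis of the coinvariant algebra yields the matching lower bound, forcing $\pi$ to be an isomorphism. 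Once both (a) and (b) are in hand, the complementarity holds and (\ref{sub_harm}) follows at once.
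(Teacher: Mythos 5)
The paper itself does not prove Theorem~\ref{thm_deux}; it is recalled verbatim from Hivert--Thi\'ery, so there is no in-house argument to compare yours against. Judged on its own terms, your part (a) is correct and --- this is the key point --- it is all that the stated conclusion requires. The operators $D_{k;q}$ are homogeneous, so $\mathcal{H}_{\mbf{x};q}$ is graded, and once $\mathcal{H}_{\mbf{x};q}\cap\mathcal{I}=\{0\}$ is known, the degree-preserving injection $\mathcal{H}_{\mbf{x};q}\hookrightarrow \BK[\mbf{x}]/\mathcal{I}$ together with the classical fact that this quotient has Hilbert series $[n]!_t$ gives (\ref{sub_harm}) at once. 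Your torsion-freeness argument identifying $\mathcal{I}\cap\R[q,\mbf{x}]$ with $(e_1,\dots,e_n)_{\R[q,\mbf{x}]}$, followed by evaluation at $q=0$ to place $f_0$ in $(e_1,\dots,e_n)_{\R[\mbf{x}]}$ and contradict $\nabla_k f_0=0$ (Theorem~\ref{thm_un}, condition (\ref{harm_conds}.a)), is sound. A minor repair: to pass from $\R(q)[\mbf{x}]$ to $\R[q][\mbf{x}]$ you must scale $f$ by an arbitrary nonzero element of $\R(q)$ (clearing denominators), not merely a power of $q$; this is harmless since both $\mathcal{H}_{\mbf{x};q}$ and $\mathcal{I}$ are $\BK$-subspaces.

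Part (b) is a genuine gap, and moreover one you should not be trying to close here: given (a), surjectivity of $\mathcal{H}_{\mbf{x};q}\to\BK[\mbf{x}]/\mathcal{I}$ is \emph{equivalent} to the Hilbert series of $\mathcal{H}_{\mbf{x};q}$ being exactly $[n]!_t$, which is precisely the open Hivert--Thi\'ery conjecture in the formal-$q$ setting --- the paper's later discussion (the remark following Theorem~\ref{thm_trois}, the partial result of Theorem~\ref{thminegalite}, and the $(n+1)!$ bound of the final section) is all premised on this equality being unknown. Your sketch for (b) --- choosing the $h_i$ ``coherently inside $\mathcal{I}$'' so the recursion $\nabla_k h_i=-\widetilde{D}_k h_{i-1}$ closes up, or producing ``enough explicit $q$-harmonic lifts'' of an Artin basis --- names the hard step without supplying it; nothing in the proposal shows the system is solvable. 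The reading of ``complementary'' consistent with the theorem's own ``therefore \dots\ sub-polynomial'' (rather than ``equal'') and with the rest of the paper is that the two subspaces meet trivially. So keep part (a), which already constitutes a complete proof of the theorem's content, and drop part (b).
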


Let us reformulate the expansion of (\ref{harm_series}) in the form 
  $$ f= q^r(f_0+q f_1+\cdots +q^m f_m)\qquad  (\hbox{with each } f_i\in \R[\mbf{x}],\  f_i\not=0) $$
We  call $f_0$  the  {\em first term} of $f$ and denote it  ``$\FT(f)$''.
Analogously we say that $f_m$ is  the {\em last term} of $f$  and  denote  it ``$\LT(f)$''.
The integer $m$ will be called the {\em length} of $f$. We also set
  \begin{equation}\label{def_BHF}
        {\mathcal{H}}_{\mbf{x}}^F := {\mathcal L}[\FT(f)\ |\  f\in {\mathcal{H}}_{\mbf{x}:q}]\quad\mathrm{and}\quad  {\mathcal{H}}_{\mbf{x}}^L := {\mathcal L}[\LT(f)\ |\  f\in {\mathcal{H}}_{\mbf{x}:q}]
  \end{equation}
  to respectively stand for the span of first terms of $q$-harmonics and last terms.
Theorem~\ref{thm_deux} has the following remarkable corollary.

\begin{theo}\label{thm_trois}
The three spaces ${\mathcal{H}}_{\mbf{x}}^F$, ${\mathcal{H}}_{\mbf{x}}^L $  and ${\mathcal{H}}_{\mbf{x};q}  $ are equivalent as graded $\S_n$-modules and therefore they are all isomorphic
to  a submodule of the Harmonics of $\S_n$. \end{theo}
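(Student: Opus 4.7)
The strategy is to realize $\pi_d(\mathcal{H}_{\mbf{x};q})$ and $\pi_d(\mathcal{H}_{\mbf{x}}^F)$ as two specializations of a single module over a discrete valuation ring, then compare $\S_n$-characters. Write $R = \R[q]_{(q)}$ for the DVR of rational functions regular at $q=0$, with fraction field $\BK$ and residue field $\R$. Fix a degree $d$, set $V_d := \pi_d(\mathcal{H}_{\mbf{x};q})$ and $N_d := \dim_\BK V_d$, and consider
$$ M_d := V_d \cap R[\mbf{x}]^{(d)}. $$
Since $R$ is a PID and $R[\mbf{x}]^{(d)}$ is a finitely generated free $R$-module, $M_d$ is itself free of finite rank. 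A ``clear denominators'' observation---every $f\in V_d$ can be multiplied by some $c\in \BK^\times$ to land in $R[\mbf{x}]$, with $cf$ still in the $\BK$-space $V_d$---will show that $M_d \otimes_R \BK = V_d$, so $\mathrm{rank}_R M_d = N_d$.

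I next examine the $R$-linear reduction map $\phi\colon M_d \to \R[\mbf{x}]^{(d)}$ sending $m\mapsto m|_{q=0}$. Its kernel is exactly $qM_d$: indeed, if $m|_{q=0}=0$ then $m/q \in R[\mbf{x}]$ and simultaneously $m/q \in V_d$ (a $\BK$-vector space), hence $m/q \in M_d$. Moreover, writing any nonzero $f \in V_d$ as $f = q^r \tilde f$ with $\tilde f \in M_d$ and $\tilde f|_{q=0}\neq 0$, one has $\FT(f) = \phi(\tilde f)$. Thus $\pi_d(\mathcal{H}_{\mbf{x}}^F) = \phi(M_d) \simeq M_d/qM_d$, an $\R$-vector space of dimension $N_d$.

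To upgrade this dimension equality to an $\S_n$-module isomorphism, I use that the operators $D_{k;q}$ are symmetric, so $\S_n$ acts on $V_d$; the action preserves $M_d$ and is $R$-linear. Both $V_d = M_d\otimes_R\BK$ and $\pi_d(\mathcal{H}_{\mbf{x}}^F) = M_d/qM_d$ are therefore specializations of the same representation $\rho\colon\S_n \to GL_{N_d}(R)$, and their characters both equal $\mathrm{tr}\,\rho(\sigma)$---the first evaluated in $\BK$, the second in $R/qR = \R$. Since $\S_n$-irreducibles are absolutely irreducible in characteristic zero with $\Z$-valued characters, $\mathrm{tr}\,\rho(\sigma)$ actually lies in $\Z \subset R$ (independent of $q$), so the two characters coincide and $V_d \simeq \pi_d(\mathcal{H}_{\mbf{x}}^F)$ as $\S_n$-modules. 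The same machinery applied to the DVR at infinity $R^\infty = \R[q^{-1}]_{(q^{-1})}$ (whose reduction reads off the leading coefficient in $q$, i.e.\ the last term) yields $V_d \simeq \pi_d(\mathcal{H}_{\mbf{x}}^L)$. Finally, Theorem~\ref{thm_un}(a) guarantees that every first term is a classical $\S_n$-harmonic, so $\mathcal{H}_{\mbf{x}}^F$ embeds $\S_n$-equivariantly in $\mathcal{H}_{\s_n}$, establishing the submodule claim.

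The main obstacle I anticipate is the crossing between the $R$-linear and $\BK$-linear settings: verifying $M_d\otimes_R\BK = V_d$ and identifying $\phi(M_d)$ with \emph{all} of $\pi_d(\mathcal{H}_{\mbf{x}}^F)$ (not merely a subspace). Both reduce to careful bookkeeping about how rescaling and extracting the largest power of $q$ interact with the ``first term'' operation, which is truly an invariant of the $\BK$-line through $f$ rather than of $f$ itself.
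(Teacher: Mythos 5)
Your proposal is correct, but it reaches the conclusion by a genuinely different mechanism than the paper. The paper decomposes ${\mathcal{H}}_{\mbf{x};q}$ over $\BK$ into homogeneous irreducible constituents $\BM^{\lambda,i}$, normalizes a generator $\varphi$ of each by a power of $q$ so that both $\FT(\varphi)$ and $\LT(\varphi)$ are nonzero, and then uses irreducibility (Schur's lemma, combined with the fact that the coefficient-of-$q^j$ extraction commutes with the $\S_n$-action, which does not involve $q$) to see that $\psi\mapsto\FT(\psi)$ and $\psi\mapsto\LT(\psi)$ are injective $\S_n$-maps on each constituent. You instead form the lattice $M_d=V_d\cap R[\mbf{x}]^{(d)}$ over the discrete valuation ring $R=\R[q]_{(q)}$ and compare the generic fibre $V_d=M_d\otimes_R\BK$ with the special fibre $M_d/qM_d$ via characters, which are $\Z$-valued and hence unchanged by reduction; the mirror argument at $q=\infty$ handles $\LT$. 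Both routes exploit the same underlying fact -- the group action is independent of $q$ while the defining equations are not -- but your version settles one point that the paper leaves implicit: when a given $\lambda$ occurs with multiplicity, one must know that the images $\FT(\BM^{\lambda,i})$ of distinct constituents remain linearly independent inside ${\mathcal{H}}_{\mbf{x}}^{F}$, and your identification $\pi_d({\mathcal{H}}_{\mbf{x}}^{F})=\phi(M_d)\simeq M_d/qM_d$ yields the exact dimension $N_d$ (hence no collapsing) for free. The cost is the DVR bookkeeping you flag -- verifying $M_d\otimes_R\BK=V_d$ and $\ker\phi=qM_d$ -- both of which are routine since $R$ is a local PID. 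The final step, that ${\mathcal{H}}_{\mbf{x}}^{F}$ sits inside the classical $\S_n$-harmonics by part (a) of Theorem~\ref{thm_un}, is identical in the two arguments.
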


\begin{proof}[\bf Proof.]
Since ${\mathcal{H}}_{\mbf{x};q}$ is an $\S_n$-module, it follows that it has a direct sum decomposition of the form
\begin{equation}\label{dim_grad}
{\mathcal{H}}_{\mbf{x};q}= \bigoplus_{\lambda}\bigoplus_{1\le i\le m_\lambda }\BM^{\lambda,i}
\end{equation}
with $\lambda$ varying in the set of partitions of $n$, and $m_\lambda$ yielding the multiplicity of the irreducible character  $\chi^\lambda$ (associated to the irreducible representation having dimension $n_\lambda$).
in the character of ${\mathcal{H}}_{\mbf{x};q}$. Moreover since  ${\mathcal{H}}_{\mbf{x};q}$ is a graded $\S_n$-module. Each of the components $\BM^{\lambda,i}$
can be chosen to consist of homogeneous elements of ${\mathcal{H}}_{\mbf{x};q}$ the same degree. This given, from elementary representation theory
it follows that each $\BM^{\lambda,i}$ is generated (under the action of the group algebra ${\mathcal A}(\S_n)$) by any one of its elements. 
Thus, we can select an element  $\varphi \in   \BM^{\lambda,i}$ from which we get a basis of $ \BM^{\lambda,i}$ in the form
    $$\{\varphi ,\theta_2\cdot \varphi ,
       \theta_3\cdot \varphi ,\ldots, 
       \theta_{n_\lambda}\cdot \varphi \}$$
for suitable elements $\theta _2,\theta _3,\,\ldots\,,\theta _{n_\lambda}\in {\mathcal A}(\S_n)$.  Moreover,  up to a re-normalization of $\varphi$ by a power of $q$, we can suppose that
$\varphi$ is of the form
$$
\varphi (\mbf{x};q)= \varphi_{0} (\mbf{x})+q\,\varphi_{1}(\mbf{x})+\cdots +q^m\varphi_{m}(\mbf{x}), 
$$
with $\varphi_{0}(\mbf{x})$ and $\varphi_{m}(\mbf{x})$ non vanishing. This will imply that
all the other elements  $ \psi\in \BM^{\lambda,i}$ will have a $q$-expansion of the form
$$
\varphi(\mbf{x};q)= \theta\cdot \varphi_{0}(\mbf{x})+q\,\theta\cdot \varphi_{1}(\mbf{x})+\cdots +q^m\theta\cdot \varphi_{m}(\mbf{x}) \qquad (\hbox{for some $\theta \in {\mathcal A}(\S_n)$})
$$
with $\theta\cdot \varphi_{0}(\mbf{x})$ and $\theta \cdot \varphi_{m}(\mbf{x})$ also non-vanishing. The reason for this is that 
the irreducibility of $ \BM^{\lambda,i}$ forces the element $\psi(\mbf{x};q)$ itself to generate  $ \BM^{\lambda,i}$ and if
any of these two components of $\theta\cdot \varphi(\mbf{x})$ where to vanish then the corresponding component of $\varphi(\mbf{x})$  would also have to vanish. This establishes explicit graded $\S_n$-modules isomorphisms between ${\mathcal{H}}_{\mbf{x};q}  $  and each of the spaces ${\mathcal{H}}_{\mbf{x}}^F$ and ${\mathcal{H}}_{\mbf{x}}^L $. Since each element in the space ${\mathcal{H}}_{\mbf{x}}^F$ is an harmonic polynomial (by virtue of (\ref{harm_conds}.a)) the
 proof is completed.
\end{proof}

In view of (\ref{dim_grad}), this result expresses the dimension of ${\mathcal{H}}_{\mbf{x}:q}$ in the form
\begin{equation}\label{dim_BHn}
   \dim {\mathcal{H}}_{\mbf{x}:q}= \sum_{\lambda\vdash n}m_\lambda n_\lambda\le \sum_{\lambda   
        \vdash n}n_\lambda^2=n!
\end{equation}
thus the single equality $\dim {\mathcal{H}}_{\mbf{x}:q}= n!$ would imply that $ {\mathcal{H}}_{\mbf{x}:q}$ affords the regular representation of $\S_n$. 
In particular this would yield that ${\mathcal{H}}_{\mbf{x}}^F$ is none other than the space of harmonics of $\S_n$. Since ${\mathcal{H}}_{\mbf{x}:q}$ is isomorphic to ${\mathcal{H}}_{\mbf{x}}^F$,
as a graded $\S_n$-module, it would follow that  ${\mathcal{H}}_{\mbf{x}:q}$  itself is isomorphic to the space of harmonics of $\S_n$ (as a graded $\S_n$-module). Thus the Hivert-Thi\'ery conjecture may simply be proved by showing that  (\ref{dim_BHn}) is in fact an equality.

\section{The Kernel of $D_k$}
The compute the general space ${\mathcal{H}}_\mbf{x}$ of harmonic polynomials,  we need to find common solutions of the differential equations
\begin{equation}\label{lequation}
   \sum_{i=1}^n a_{i,k}x_i \partial_{i}^{k+1} + b_{i,k} \partial_{i}^k f(\mbf{x})= 0.
   \end{equation} 
We begin by studying the space of solutions of just one such equation. It develops that the corresponding kernel of the operator 
$D_k$ may be given a precise explicit description whenever $a_{i,k}\,d+b_{i,k}\not=0$, for all $d\in \N$. Since we will work with a fixed value of $k$, we will lighten the notation by writing simply $a_{i}$ instead of $a_{i,k}$.

The case $k=1$ illustrates all aspects of the method. We   construct a set 
  \begin{equation}\label{la_base}
       \{\mbf{y}^\mbf{r}+\Psi_1(\mbf{y}^\mbf{r})\}_{\mbf{r}\in \N^{n-1}}
   \end{equation}
which is a basis of the solution set of (\ref{lequation}), for $k=1$. Here, $\Psi_1$  is a linear operator described below. Let us start with an adapted and reformulated observation of Hivert and Thi\'ery \cite[Proposition 18]{hivert_thiery}. Simply writing $x$ for $x_n$, and $\mbf{y}$ for the set of variables $x_1,\ldots, x_{n-1}$, we expand $f\in\R[\mbf{x}]$  as polynomials in $x$:
\begin{equation}\label{expansion_xn}
   f=\sum_d f_d  \frac{x^d}{d!},\qquad \mathrm{with}\ f_d\in\R[\mbf{y}].
\end{equation}
The effect of $D_1$ can then be described in the format
\begin{equation}\label{reduction_D}
  D_1\left(\sum_d f_d   \frac{x^d}{d!}\right)=
     \sum_d \left[D_1(f_d ) + (d\,a_n+b_n)f_{k+1} \right] \frac{x^d}{d!}.
\end{equation}
Setting $a:=a_n$ and $b:=b_n$, we now assume that $a\,d+b\not=0$, for all $d\in\N$. Then, the right-hand side of (\ref{reduction_D}) vanishes if and only we choose $f$ to be such that
\begin{equation}\label{recurrence_f}
    f_{d+1} =\frac{-1}{ a\,d+b}\, D_1(f_d ),
\end{equation}
for all $d\geq 0$. Unfolding this recurrence for the $f_d $'s, we find that
every element of the kernel of $D_1$ can be written as $f_0+\Psi_1(f_0)$, if we define the linear operator $\Psi_1$ as 
    \begin{equation}\label{formule_phi}
          \Psi_1(g) := \sum_{m\geq 1}  (-1)^m \frac{D_1^m(g)}{[a;b]_m} \,\frac{x^m}{m!}, \qquad \mathrm{for} \ g\in\R[\mbf{y}].
    \end{equation}
Here  we use the notation
       $$[a;b]_m:=b\,(a+b)\,(2\,a+b) \cdots ((m-1)\,a+b).$$ 
This leads to the following theorem.

\begin{theo}\label{thm_quatre_un}
The collection of polynomials $\mbf{y}^\mbf{r}+ \Psi_1(\mbf{y}^\mbf{r})$  is a basis 
for the kernel of $D_1$. In fact, given any polynomial $f$ in the kernel of $ D_1$,
its expansion in terms of this basis is simply obtained as
\begin{equation}\label{formule_base}
    f =\sum_{\mbf{r}}  a_{\mbf{r}} (\mbf{y}^\mbf{r}+\Psi_1(\mbf{y}^\mbf{r}))
   \end{equation}
with $(f \mod x)=\sum_{\mbf{r}}a_{\mbf{r}} \mbf{y}^\mbf{r}$.
\end{theo}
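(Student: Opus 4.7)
The plan is to exhibit an explicit linear isomorphism $\Phi\colon \R[\mbf{y}] \to \ker D_1$ given by $\Phi(g) := g + \Psi_1(g)$, and then read off the claimed basis and expansion formula by applying $\Phi$ to the monomial basis $\{\mbf{y}^\mbf{r}\}_{\mbf{r} \in \N^{n-1}}$ of $\R[\mbf{y}]$. The text has in effect already done the computational work in (\ref{reduction_D})--(\ref{formule_phi}); what remains is to assemble these pieces into a clean well-defined / injective / surjective argument.

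First I would verify that $\Phi$ is well-defined and lands in $\ker D_1$. Writing $x = x_n$, $\mbf{y} = (x_1,\dots,x_{n-1})$, $a = a_n$, $b = b_n$, the operator $D_1$ lowers total degree by exactly one, so $D_1^m(g) = 0$ as soon as $m$ exceeds $\deg g$; hence the series defining $\Psi_1(g)$ terminates for any $g\in\R[\mbf{y}]$. The hypothesis $a\,d + b \neq 0$ for every $d\in\N$ guarantees that the denominators $[a;b]_m$ are nonzero, so $\Psi_1$ is unambiguously defined. By its very formula, the coefficient of $x^d/d!$ in the $x$-expansion of $\Phi(g)$ is $f_d = (-1)^d\,D_1^d(g)/[a;b]_d$, and this sequence satisfies the recurrence $f_{d+1} = -D_1(f_d)/(a\,d+b)$; substituting into (\ref{reduction_D}) then yields $D_1\Phi(g) = 0$.

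Next I would show $\Phi$ is bijective. Injectivity is immediate because $\Phi(g) \equiv g \pmod{x}$, so $\Phi(g)=0$ forces $g=0$. For surjectivity, take any $f\in\ker D_1$, expand $f = \sum_d f_d\,x^d/d!$ with $f_d\in\R[\mbf{y}]$, and set $g := f_0 = (f \bmod x)$. The vanishing on the right-hand side of (\ref{reduction_D}) forces exactly the recurrence $f_{d+1} = -D_1(f_d)/(a\,d+b)$, so a straightforward induction gives $f_d = (-1)^d D_1^d(g)/[a;b]_d$ for all $d\geq 0$, and hence $f = g + \Psi_1(g) = \Phi(g)$.

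Having established $\Phi$ as a linear isomorphism $\R[\mbf{y}] \to \ker D_1$, the image of the monomial basis $\{\mbf{y}^\mbf{r}\}_{\mbf{r}\in\N^{n-1}}$ is the claimed basis $\{\mbf{y}^\mbf{r} + \Psi_1(\mbf{y}^\mbf{r})\}$ of $\ker D_1$. The explicit expansion (\ref{formule_base}) is then obtained by reducing the identity $f = \Phi\bigl(\sum_\mbf{r} a_\mbf{r}\,\mbf{y}^\mbf{r}\bigr)$ modulo $x$, which yields $f \bmod x = \sum_\mbf{r} a_\mbf{r}\,\mbf{y}^\mbf{r}$ and so identifies the coefficients $a_\mbf{r}$. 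There is no genuine obstacle in this proof; the one point demanding a little care is verifying that the closed form of $\Psi_1$ in (\ref{formule_phi}) does correctly unfold the recurrence, which is a direct induction using the definition of $[a;b]_m$.
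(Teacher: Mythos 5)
Your proposal is correct and follows essentially the same route as the paper, which proves the theorem implicitly in the preceding discussion by expanding $f$ in powers of $x=x_n$, deriving the recurrence $f_{d+1}=-D_1(f_d)/(a\,d+b)$ from (\ref{reduction_D}), and unfolding it to obtain $\Psi_1$. Your packaging of this as a well-defined, injective, surjective linear map $g\mapsto g+\Psi_1(g)$ is a clean and faithful formalization of that argument, including the necessary observations that the series terminates because $D_1$ lowers degree and that the denominators $[a;b]_m$ are nonzero by hypothesis.
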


It follows readily from this theorem that
\begin{prop}\label{generic_ker}
Whenever $a\,d+b\not=0$ for all $d\in\N$, the Hilbert series of the dimension of the kernel of $D_1$ is 
   $$\frac{1}{(1-t)^{n-1}}$$
\end{prop}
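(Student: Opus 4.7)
The plan is to extract the Hilbert series directly from the explicit basis provided by Theorem~\ref{thm_quatre_un}. The key point is that although the definition of $\Psi_1$ involves an \emph{a priori} infinite sum, each basis element $\mbf{y}^\mbf{r}+\Psi_1(\mbf{y}^\mbf{r})$ is in fact homogeneous of degree $|\mbf{r}|$. Once this is established, counting these basis elements by degree yields the stated formula.

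First, I would record that the operator $D_1=\sum_{i=1}^n a_i\,x_i\partial_i^{2}+b_i\,\partial_i$ is homogeneous of degree $-1$: each summand $x_i\partial_i^2$ and $\partial_i$ lowers degree by exactly one. Consequently, for any homogeneous polynomial $g$ of degree $d$, the iterate $D_1^m(g)$ is homogeneous of degree $d-m$, and it vanishes as soon as $m>d$. Applying this with $g=\mbf{y}^\mbf{r}$, the sum
\[
\Psi_1(\mbf{y}^\mbf{r})=\sum_{m\geq 1}(-1)^m\,\frac{D_1^m(\mbf{y}^\mbf{r})}{[a;b]_m}\,\frac{x^m}{m!}
\]
is genuinely finite (it stops at $m=|\mbf{r}|$), and each term $\frac{D_1^m(\mbf{y}^\mbf{r})}{[a;b]_m}\,\frac{x^m}{m!}$ is homogeneous of degree $(|\mbf{r}|-m)+m=|\mbf{r}|$. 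Hence $\Psi_1(\mbf{y}^\mbf{r})$, and therefore the basis element $\mbf{y}^\mbf{r}+\Psi_1(\mbf{y}^\mbf{r})$, is homogeneous of degree $|\mbf{r}|$.

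Next, because the kernel of $D_1$ is a graded subspace of $\R[\mbf{x}]$ (as $D_1$ is homogeneous), and because Theorem~\ref{thm_quatre_un} gives $\{\mbf{y}^\mbf{r}+\Psi_1(\mbf{y}^\mbf{r})\}_{\mbf{r}\in\N^{n-1}}$ as a basis, the dimension of the degree-$d$ component of $\ker D_1$ is precisely the number of $\mbf{r}\in\N^{n-1}$ with $|\mbf{r}|=d$, namely $\binom{d+n-2}{n-2}$. Summing over $d$ and using the standard identity
\[
\sum_{d\geq 0}\binom{d+n-2}{n-2}\,t^d=\frac{1}{(1-t)^{n-1}}
\]
gives the announced Hilbert series.

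There is no serious obstacle here: the only subtlety is checking that $\Psi_1$ preserves homogeneity, which amounts to balancing the drop in degree from each $D_1^m$ against the rise in degree from each $x^m$. All the heavy lifting is already done by Theorem~\ref{thm_quatre_un}; the proposition is essentially a graded dimension count.
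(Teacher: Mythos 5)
Your proof is correct and matches the paper's (implicit) argument: the paper simply asserts that the proposition "follows readily" from Theorem~\ref{thm_quatre_un}, and your degree bookkeeping — $D_1$ lowers degree by one, so each term of $\Psi_1(\mbf{y}^\mbf{r})$ has degree $(|\mbf{r}|-m)+m=|\mbf{r}|$, making the basis homogeneous — is exactly the missing verification. The final count of $\mbf{r}\in\N^{n-1}$ by total degree is the standard one, so nothing further is needed.
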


In view of Theorem~\ref{thm_quatre}, it follows that the Hilbert series of the kernel of $\widetilde{D}_1$ is 
\begin{equation}\label{borne_ker_star}
     1+\sum_{k=1}^n {n \choose k} t^k\frac{1}{(1-t)^{k-1}}.
\end{equation}
In fact, we can get an explicit description of this kernel using (\ref{eq_prop_un}).  

We can generalize formula (\ref{formule_phi}) to get a description of the kernel of $D_k$ as follows.
Observe as before that
\begin{equation}\label{reduction_gen}
  D_k\left(\sum_d f_d  \frac{x^d}{d!}\right) =
   \sum_d \left[D_k(f_d ) +(a\, d+b)f_{d+k}\right] \frac{x^{d}}{d!}.
\end{equation}
For this expression to be zero, we must have
     $$f_{d+k} =\frac{-1}{ a \,d+b}\, D_k(f_{d}),$$
 with the same conditions as before on $a$ and $b$.
This recurrence has a unique solution given initial values for $f_d$, $0\le d\le k-1$. Clearly these can be fixed at leisure. 
Substituting the solution of the recurrence in $f$,
we get an element of the kernel of $D_k$ if and only if $f$ is of the form
    $$f=(f \mod x^k) + \Psi_k(f \mod x^k),$$
with $\Psi_k$ the linear operator defined as
    \begin{equation}\label{formule_phi_ell}
         \Psi_k\left(\sum_{r=0}^{k-1} f_r  \frac{x^r}{r!}\right):= \sum_{m\ge 1} \sum_{r=0}^{k-1}(-1)^m \frac{D_k^m(f_r)}{[a\,k;a\,r+b]_m} \,\frac{x^{k\,m+r}}{(k\,m+r)!}.
    \end{equation}
In particular, it follows that the Hilbert series of the kernel of $D_k$ is
\begin{equation}
   (1+t+\ldots t^{k-1})\,\frac{1}{(1-t)^{n-1}}.
\end{equation}

\section{Some explicit harmonic polynomials}
Common zeros of all $D_k$'s are exactly what we are looking for. Some of these are easy to find when the $D_k$'s are symmetric. This condition holds for the special cases considered in sections~\ref{sectionq}  and \ref{sectiontilde}. 

Now, let $\lambda$ be any partition of $n$, and consider a tableau $\tau$ of shape $\lambda$, this is to say a bijection
   $$\tau:\lambda\longrightarrow \{1,2,\ldots, n\},$$
 with $\lambda$ identified with the set of cells of its Ferrers diagram.
 Recall that, for $\lambda=\lambda_1\geq \lambda_2\geq \ldots\geq \lambda_k>0$,  the {\em cells} of $\lambda$ are the $n$ pairs $(i,j)$ in $\N^2$, such that
     $$1\leq i\leq  \lambda_j,\qquad 1\leq j\leq k.$$
 The value $\tau(i,j)$ is called an {\em entry} of $\tau$, and it is said to lie in {\em column} $i$ of $\tau$. 
The {\em Garnir polynomial} of a $\lambda$-shape tableau $\tau$, is defined to be
   $$\Delta_\tau(\mbf{x}):=\prod_{i,\ j<k}
       (x_{\tau(i,j)}-x_{\tau(i,k)}).$$
In other terms, the factors that appear in $\Delta_\tau(\mbf{x})$ are differences of entries of $t$ that lie in the same column.

Now, define $\mathcal{V}_\lambda$ to be the linear span  of the polynomials $\Delta_\tau$, for $\tau$ varying in the set of tableaux of shape $\lambda$. In formula,
    $$\mathcal{V}_\lambda:=\R[ \Delta_\tau\ |\ \tau\ \mathrm{tableau\ of\ shape}\ \lambda\}.$$
It is well known that  this homogeneous (invariant) subspace is an irreducible representation of of $\S_n$ of dimension equal to the number of standard Young tableaux. Moreover, in the ring $\R[\mbf{x}]$, there exists no isomorphic copy of this irreducible representation lying in some homogeneous component of degree lower then that in which lies $\mathcal{V}_\lambda$. It is easy to check that the degrees of all of the $\Delta_\tau$'s, for a tableau of shape $\lambda$, are all equal to
   $$\sum_{i=1}^{\ell(\lambda)} (i-1)\, \lambda_i,$$
 which is usualy denoted $n(\lambda)$ in the literature (See~\cite{macdonald}). This is the smallest possible value for the  cocharge of a standard tableau of shape $\lambda$.
This fact has the following easy implication.

\begin{prop} For any tableau $\tau$ of shape $\lambda$, the Garnir polynomial $\Delta_\tau(\mbf{x})$ is a zero of $D_k$, for $k\geq 1$, whenever $D_k$ is symmetric.
\end{prop}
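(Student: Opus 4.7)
The plan is to use the representation-theoretic observation already built into the statement, namely that $\mathcal{V}_\lambda$ is an irreducible $\S_n$-submodule sitting in the minimal possible degree $n(\lambda)$ in which an irreducible of type $\lambda$ can occur inside $\R[\mbf{x}]$. I would combine this with Schur's lemma applied to $D_k$, viewed as a graded $\S_n$-equivariant map.

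More concretely, I would proceed as follows. First I would observe that when $D_k$ is symmetric, its coefficients $a_{i,k}$ and $b_{i,k}$ are independent of $i$, so $D_k$ commutes with the action of $\S_n$ on $\R[\mbf{x}]$. Thus the restriction $D_k|_{\mathcal{V}_\lambda}\colon \mathcal{V}_\lambda \to \R[\mbf{x}]$ is an $\S_n$-module homomorphism. Since $D_k$ is homogeneous of degree $-k$ and $\mathcal{V}_\lambda$ lies in degree $n(\lambda)$, its image lies entirely in the homogeneous component $\R[\mbf{x}]_{n(\lambda)-k}$.

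Next I would apply Schur's lemma. The kernel $\ker(D_k|_{\mathcal{V}_\lambda})$ is an $\S_n$-submodule of the irreducible module $\mathcal{V}_\lambda$, so it is either $0$ or all of $\mathcal{V}_\lambda$. If it were $0$, then $D_k(\mathcal{V}_\lambda)$ would be an isomorphic copy of $\mathcal{V}_\lambda$ (again by irreducibility), which would be an irreducible $\S_n$-submodule of type $\lambda$ sitting in degree $n(\lambda)-k < n(\lambda)$. This contradicts the recalled fact that $n(\lambda)$ is the smallest degree in $\R[\mbf{x}]$ in which an irreducible of type $\lambda$ can occur. Therefore the kernel must be all of $\mathcal{V}_\lambda$, and in particular $D_k \Delta_\tau = 0$ for every tableau $\tau$ of shape $\lambda$.

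The only nontrivial ingredient is the minimality of $n(\lambda)$, but this is precisely the classical fact the paper has just invoked (with reference to \cite{macdonald}), so I would quote it rather than prove it. The rest is a short Schur-lemma argument, and no obstacle beyond making sure the symmetry hypothesis is used exactly to guarantee $\S_n$-equivariance of $D_k$.
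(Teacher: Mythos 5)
Your argument is correct and is essentially identical to the paper's own proof: both use that a symmetric $D_k$ is an $\S_n$-equivariant, degree-lowering map, apply Schur's lemma to its restriction to the irreducible module $\mathcal{V}_\lambda$, and rule out the injective case via the minimality of the degree $n(\lambda)$. You have merely spelled out the equivariance and the contradiction in more detail than the paper does.
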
 

\begin{proof}[\bf Proof.] We need only observe that $D_k$ is a degree lowering morphism of representation. Indeed, in view of Schur's lemma, either $D_k(\mathcal{V}_\lambda)$ is isomorphic   to $\mathcal{V}_\lambda$ or it is simply reduced to the trivial subspace $\{0\}$. In view of the minimality of the degree of $\mathcal{V}_\lambda$, only the later possibility can occur, and we are done with the proof.
\end{proof}

A direct consequence of this is that there is at least one copy of each irreducible representation of $\S_n$ in ${\mathcal{H}}_{\mbf{x}}$, when the $D_k$' s are all symmetric. Moreover, under the same conditions, we have
    $$\sum_{\lambda\vdash n} f_\lambda\,t^{n(\lambda)}\ll
        H_n(t), $$
with ``$\ll$'' denoting coefficient wise inequality.

\section{More dimension estimates}
To explore further possible dimension estimates, we now make use of the dual operators $D_k^*$, assuming that condition (\ref{condition}) holds all through this section. Recall that this is true for the special cases of sections~\ref{sectionq} and \ref{sectiontilde}. 

\begin{prop}\label{prop_base}
 Let ${\mathcal B}_{n}$  be a homogeneous basis:
    $${\mathcal B}_{n} =\bigcup_{d\geq 0}{\mathcal B}_{d,n},$$
 with $\mathcal{B}_{d,n} =\pi_d(\mathcal{B}_n)$,
 for the space ${\mathcal{H}}_{\mbf{x}}$, with the $D^*_k$'s  satisfying (\ref{condition}). Then every homogeneous degree  $m$ polynomial $f(\mbf{x})$ has an expansion of the form
\begin{equation}\label{expansion}
f(\mbf{x})=\sum_{d=0}^m\sum_{g\in{\mathcal B}_{d,n}}\sum_{\lambda\vdash m-d}c_{\lambda,g}D^*_\lambda\,  g(\mbf{x})  
\end{equation}
\end{prop}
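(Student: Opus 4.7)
The argument is induction on the degree $m$. For $m=0$ every polynomial is a constant; constants are killed by each degree-lowering $D_k$, so they lie in $\pi_0(\mathcal{H}_{\mbf{x}}) = \mathrm{span}(\mathcal{B}_{0,n})$. This yields the desired form with $d=0$ and $\lambda$ empty, under the natural convention $D^*_\emptyset=\mathrm{Id}$.

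The inductive step rests on the orthogonal decomposition recorded in Section~2. The identity $\langle f, D^*_k g\rangle = \langle D_k f, g\rangle$ shows that $\mathcal{H}_{\mbf{x}}$ is precisely the orthogonal complement, under the scalar product (\ref{scalar_def}), of the (graded) hit-subspace. Combined with the non-degeneracy of $\langle\cdot,\cdot\rangle$ on each $\R[\mbf{x}]_m$ (immediate from $\langle \mbf{x}^\mbf{a},\mbf{x}^\mbf{b}\rangle=\mbf{a}!\,\delta_{\mbf{a},\mbf{b}}$), this gives the direct-sum decomposition
\begin{equation*}
\R[\mbf{x}]_m \;=\; \pi_m(\mathcal{H}_{\mbf{x}}) \;\oplus\; \sum_{k\ge 1} D^*_k\bigl(\R[\mbf{x}]_{m-k}\bigr).
\end{equation*}
Consequently any homogeneous $f$ of degree $m$ splits as $f = h + \sum_{k\ge 1} D^*_k g_k$ with $h\in \pi_m(\mathcal{H}_{\mbf{x}})$ and $g_k\in \R[\mbf{x}]_{m-k}$.

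Now $h$ expands directly over $\mathcal{B}_{m,n}$, supplying the $d=m$, $\lambda=\emptyset$ summands in (\ref{expansion}). Applying the induction hypothesis to each $g_k$ (which has degree $m-k<m$) produces
\begin{equation*}
g_k \;=\; \sum_{d=0}^{m-k}\,\sum_{g\in\mathcal{B}_{d,n}}\,\sum_{\mu\vdash m-k-d} c^{(k)}_{\mu,g}\, D^*_\mu\, g ,
\end{equation*}
so $D^*_k g_k$ becomes a linear combination of terms $D^*_k D^*_\mu g$, each indexed by the composition $(k,\mu_1,\ldots,\mu_{\ell(\mu)})$ of $m-d$. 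At this point Proposition~\ref{red_comp}, whose hypothesis is exactly (\ref{condition}), rewrites every such product as a linear combination of operators $D^*_\lambda$ with $\lambda\vdash m-d$. Reassembling gives the form (\ref{expansion}) and closes the induction.

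\textbf{Main obstacle.} The only delicate step is securing the orthogonal-complement decomposition in each fixed homogeneous degree, together with confirming that the hit-subspace of degree $m$ is genuinely spanned by the images $D^*_k(\R[\mbf{x}]_{m-k})$ (a consequence of the homogeneity of each $D^*_k$). Once this is in hand, everything else is routine bookkeeping plus a direct appeal to Proposition~\ref{red_comp}: the nontrivial algebraic content --- reducing arbitrary products of $D^*_k$'s to partition-indexed products --- has been packaged into that earlier result, which is what makes the present argument fit together cleanly.
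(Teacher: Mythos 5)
Your proof is correct and follows essentially the same route as the paper: induction on the degree $m$, the observation that in each homogeneous degree $\R[\mbf{x}]_m$ splits as $\pi_m(\mathcal{H}_{\mbf{x}})$ plus the hit-subspace $\sum_k D^*_k(\R[\mbf{x}]_{m-k})$ (via adjointness and non-degeneracy of the scalar product (\ref{scalar_def})), followed by the induction hypothesis on the $g_k$'s and an appeal to Proposition~\ref{red_comp} to convert composition-indexed products $D^*_\alpha$ into partition-indexed ones. The only difference is cosmetic: you make the positive-definiteness of the pairing and the resulting direct-sum decomposition explicit, where the paper states the orthogonality more briefly.
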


\begin{proof}[\bf Proof.]
We proceed by induction on the degree $m$ of $f$, observing that the degree $0$ case holds trivially.
It follows readily from definitions that  the space ${\mathcal{H}}_{\mbf{x}}$ is the orthogonal complement of the graded vector space
$$
{\mathcal L}\Big[\sum_{k\geq 1} D^*_k\,g_k\ |\ g_k\in \R[\mbf{x}]\Big]
$$
From this observation, it follows that every degree $d$ homogeneous polynomial 
$f$ has an expansion of the form
$$
f(\mbf{x})=\sum_{g\in{\mathcal B}_{m,n}}c_{g}\, g(\mbf{x}) +\sum_{k\geq 1}  D^*_k\,f_k(\mbf{x})$$
with the $f_k(\mbf{x})$'s homogeneous polynomials of degree $m-k$. Applying the induction hypothesis to 
each of the $f_k$'s, we get
$$
f(\mbf{x})=\sum_{d=0}^m\sum_{g\in{\mathcal B}_{d,n}}\sum_{\alpha\models m-d}c_{\alpha,g}D^*_\alpha\,  g(\mbf{x})\, .
$$
Proposition~\ref{red_comp} guarantees
that the composition indexed operators $D_\alpha^*$ in such an expansion can be converted into partition indexed operators $D_\lambda^*$, completing the proof.
\end{proof}

As a corollary we derive the following remarkable identity for the Hilbert series of $q$-harmonics.

\begin{theo}\label{thminegalite}
Let $\mathcal{B}_n$ be a homogeneous basis, as in proposition~\ref{prop_base}, of the space ${\mathcal{H}}_{\mbf{x};q}$ of $q$-harmonics. Then,
denoting by $c_{d,n}$ the cardinality of ${\mathcal B}_{d,n}$,  we have 
\begin{equation}\label{borne}
 c_{d,n}= [n]_t!\Big|_{t^d},
\end{equation}
for all $d\le n$, with the right-hand side denoting the coefficient of $t^d$ in $[n]!_t$.
\end{theo}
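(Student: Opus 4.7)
The plan is a sandwich argument. I bound $c_{d,n}$ from below via Proposition~\ref{prop_base} and from above via the Hivert--Thi\'ery inclusion Theorem~\ref{thm_deux}, and I observe that for $d\le n$ the two bounds already coincide through a classical coinvariant identity. To apply Proposition~\ref{prop_base} I first note that the commutation relation (\ref{crochet_dq}) dualises to $[D^*_{k;q},D^*_{j;q}]=q(j-k)\,D^*_{k+j;q}$, so condition (\ref{condition}) holds in the $q$-setting. The spanning statement of Proposition~\ref{prop_base} then expresses every degree-$m$ polynomial as a linear combination of the elements $D^*_\lambda g$ with $g\in\mathcal{B}_{d,n}$, $\lambda\vdash m-d$, $0\le d\le m$. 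Comparing dimensions yields
$$\binom{m+n-1}{n-1}\;=\;\dim\R[\mbf{x}]_m\;\le\;\sum_{d=0}^m c_{d,n}\,p(m-d),$$
where $p(k)$ is the number of unrestricted partitions of $k$.

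For the upper side, Theorem~\ref{thm_deux} gives the pointwise bound $c_{d,n}\le [n]!_t\big|_{t^d}$. The matching combinatorial identity comes from the classical graded factorisation $\R[\mbf{x}]\simeq \R[\mbf{x}]^{\S_n}\otimes\R[\mbf{x}]_{\S_n}$, namely
$$\frac{1}{(1-t)^n}\;=\;[n]!_t\cdot\prod_{k=1}^n\frac{1}{1-t^k},$$
whose coefficient of $t^m$ reads
$$\binom{m+n-1}{n-1}\;=\;\sum_{d=0}^m\bigl([n]!_t\big|_{t^d}\bigr)\,p_n(m-d),$$
with $p_n(k)$ counting partitions of $k$ whose parts are all $\le n$.

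To close the sandwich, observe that for $m\le n$ every partition of $m-d\le n$ has parts $\le n$ automatically, so $p_n(m-d)=p(m-d)$. Combining the three displays above, for such $m$,
$$\sum_{d=0}^m\bigl(c_{d,n}-[n]!_t\big|_{t^d}\bigr)\,p(m-d)\;\ge\;0,$$
while each summand is $\le 0$ by Hivert--Thi\'ery and each $p(m-d)>0$; hence every summand vanishes, forcing $c_{d,n}=[n]!_t\big|_{t^d}$ for all $d\le n$. The delicate point is precisely the coincidence $p_n(m-d)=p(m-d)$ available only when $m\le n$: it is what lets the lower and upper bounds meet, and it is also what restricts the conclusion to $d\le n$. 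Beyond that range, partitions of $m-d$ with parts exceeding $n$ would enter the indexing of the $D^*_\lambda$'s on the left but not the coinvariant series on the right, and the argument no longer closes without further input.
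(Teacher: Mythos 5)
Your proof is correct and follows essentially the same route as the paper's: a lower bound from the spanning set of Proposition~\ref{prop_base} against $\dim\R[\mbf{x}]_m=\binom{m+n-1}{n-1}$, the coefficientwise upper bound $c_{d,n}\le [n]!_t\big|_{t^d}$ from Theorem~\ref{thm_deux}, and the identity $\tfrac{1}{(1-t)^n}=[n]!_t\prod_{i=1}^n(1-t^i)^{-1}$ to close the sandwich. Your explicit remark that the coincidence of unrestricted and parts-$\le n$ partition counts holds only up to $n$ is exactly the point the paper's proof leaves implicit, and it correctly accounts for the restriction to $d\le n$.
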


\begin{proof}[\bf Proof.]
Let $P_k$ stands for the number of partitions of $k$, with parts of size at most $n$.
Recall that the generating series for these numbers is
   $$\sum_k {P_k} t^k = \prod_{k\geq 1} \frac{1}{1-t^k}.$$
 From the expansion in (\ref{expansion}) it follows that
\begin{equation}\label{inegalite}
\frac{1}{ (1-t)^n}\ \ll\ \sum_{m\ge 0} \sum_{d\ge 0} c_{d,n}P_{m-d}\,t^m\ = 
\sum_{d\ge 0} c_{d,n} t^d
\prod_{k\ge 1}\frac{1}{1-t^k},
\end{equation}
On the other hand, for the Hilbert series of ${\mathcal{H}}_{\mbf{x};q}$:
$$
{\mathcal H}_{\mbf{x};q}(t)=\sum_{d\ge 0} c_{d,n}\, t^{d},
$$
we have (See (\ref{sub_harm})) that
$$
\sum_{d\ge 0} c_{d,n}\, t^{d}\ll  [n]_t!.
$$
Multiplying both sides of this by $\prod_i (1-t^i)^{-1}$, and using  (\ref{inegalite}), we derive that
\begin{eqnarray}
\frac{1}{ (1-t)^n} &\ll&
 \sum_{d\geq 0}^m c_{d,n}
\prod_{i\ge 1}\frac{1}{1-t^i} \nonumber\\
&\ll& 
\sum_{d\ge 0} [n]_t! 
\prod_{i\ge 1}\frac{1}{1-t^i} \label{inegalite_bis}
\end{eqnarray}
However the simple identity
$$
\frac{1}{ (1-t)^n}=[n]_t!\prod_{i=1}^n\frac{1}{1-t^i}
$$
forces all inequalities between coefficients  to be equalities, when $d$ is between $0$ and $n$. This completes our proof.
\end{proof}

A refined reading of this proof reveals that we would actually have equality for all $d$'s, if we could show that elements of ${\mathcal B}_{n}$ could be chosen so that the all variables appear with power less or equal to $n-1$. Indeed this would imply that indices of the $D^*_\lambda$'s, appearing in (\ref{expansion}), would involve parts of size at most $n$. Then  (\ref{inegalite}) could be written in the stronger form\footnote{Observe that we have here a finite product rather the an infinite one.}
\begin{equation}
\frac{1}{ (1-t)^n} \ \ll\ \sum_{m\ge 0} \sum_{d\ge 0} c_{d,n}\Pi_{m-d}\,t^m\ \ll\ 
\sum_{d\ge 0} c_{d,n} t^d
\prod_{i= 1}^n\frac{1}{1-t^i}.
\end{equation}
Continuing the argument as in the proof would then lead to the conclusion that these three series actually coincide.
 
 \section{A new regular sequence and a universal dimension bound}\label{sec_regular}

 The goal of this section is to establish a bound for the dimension of $ {\mathcal{H}}_{\mbf{x}:q}$ which is valid
for all values of $q$. To carry this out we need some auxiliary results from  commutative algebra. 
Let $\CF$ be an algebraically closed field and let $\theta_1(\mbf{x}),\theta_2(\mbf{x}) ,\,\ldots\,,\theta_n(\mbf{x})$ be homogeneous polynomials of  $\CF[\mbf{x}]$ of respective degrees $d_1,d_2,\,\ldots\,,d_n$. The following result is basic.

\begin{prop}\label{reg_theta}
The polynomials $\theta_1(\mbf{x}),\theta_2(\mbf{x}) ,\,\ldots\,,\theta_n(\mbf{x})$ form a regular sequence in  $\CF[\mbf{x}]$ if and only if
the system of equations
$$
\theta_1(\mbf{x})=0\, ,\ \theta_2(\mbf{x})=0 \, ,\, \ldots\, ,\ \theta_n(\mbf{x})=0
$$
has,  for $\mbf{x}\in \CF^n$,   the unique solution 
$$
x_1=0\, ,\ x_2=0\, ,\ \ldots  \, ,\ x_n=0. 
$$
\end{prop}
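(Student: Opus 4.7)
The plan is to establish both implications via standard commutative algebra, using as the main input the fact that $\CF[\mbf{x}]$ is a Cohen--Macaulay ring (indeed regular) of Krull dimension $n$. Throughout, write $R=\CF[\mbf{x}]$, $\mathfrak{m}=(x_1,\ldots,x_n)$, and $I=(\theta_1,\ldots,\theta_n)$, which is a homogeneous ideal since each $\theta_i$ is homogeneous.

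For the ``only if'' direction, suppose $\theta_1,\ldots,\theta_n$ form a regular sequence. Then a classical fact about regular sequences gives $\mathrm{ht}(I)\geq n$; combined with the Krull height theorem this forces $\mathrm{ht}(I)=n$, so $\dim(R/I)=0$. Geometrically this means the affine variety $V(I)\subseteq \CF^n$ is finite. However, because each $\theta_i$ is homogeneous, $V(I)$ is a cone through the origin: if $\mbf{v}\in V(I)$ then $\lambda\mbf{v}\in V(I)$ for every $\lambda\in\CF$. A finite cone (in a field with infinitely many scalars) can only be $\{0\}$, yielding the stated conclusion.

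For the ``if'' direction, suppose the only common zero in $\CF^n$ is the origin. Since $\CF$ is algebraically closed, Hilbert's Nullstellensatz gives $\sqrt{I}=\mathfrak{m}$. In particular $I$ is $\mathfrak{m}$-primary, so $R/I$ is Artinian and $\mathrm{ht}(I)=n$. Now invoke the Cohen--Macaulay property of $R$: in a Cohen--Macaulay ring, any sequence of $n$ elements generating an ideal of height $n$ (equivalently, any system of parameters) is automatically a regular sequence. Applied to our $\theta_1,\ldots,\theta_n$, this gives the desired conclusion.

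The genuine content of the proof is concentrated in the ``if'' direction: one must know that in a Cohen--Macaulay ring the height of an ideal generated by $n$ elements equals $n$ precisely when the generators form a regular sequence. In this setting, since $R=\CF[\mbf{x}]$ is a polynomial ring over a field, this can also be seen directly via Hilbert series: letting $R_i:=R/(\theta_1,\ldots,\theta_i)$, one checks that the Hilbert series of $R_n$ is a polynomial (because $R_n$ is finite-dimensional), and the identity
\[
\mathrm{Hilb}(R;t)=\prod_{i=1}^{n}\frac{1}{1-t}
\]
forces each successive factor $1-t^{d_i}$ to appear, which occurs if and only if multiplication by $\theta_i$ is injective on $R_{i-1}$ in every degree. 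This is the step where the special nature of the polynomial ring is essential, and it is the only non-formal point in the argument.
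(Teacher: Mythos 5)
Your proof is correct, and the ``if'' direction coincides with the paper's: Nullstellensatz gives that some power of each $x_i$ lies in the ideal, hence the quotient is finite dimensional, the $\theta_i$ form a system of parameters, and Cohen--Macaulayness of $\CF[\mbf{x}]$ upgrades this to a regular sequence. Where you genuinely diverge is the ``only if'' direction. You argue abstractly: a regular sequence of length $n$ forces $\mathrm{ht}(\theta_1,\ldots,\theta_n)=n$, hence $\dim \CF[\mbf{x}]/(\theta_1,\ldots,\theta_n)=0$, so the zero locus is finite; homogeneity makes it a cone, and a finite cone over the (infinite) field $\CF$ is $\{0\}$. The paper instead works concretely with the Hilbert series of the quotient, $\prod_i(1+t+\cdots+t^{d_i-1})$, to conclude that every homogeneous polynomial of degree exceeding $d_{\max}=\sum_i(d_i-1)$ lies in the ideal, in particular $x_i^{d_{\max}+1}=\sum_j A_{i;j}\theta_j$, and then evaluates this identity at a putative common zero. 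Both are valid; your route is shorter and more conceptual, while the paper's explicit version yields the quantitative degree bound $d_{\max}+1$ that is reused later (e.g.\ the membership $x_i^{\binom{n}{2}+1}\in\Phi_n$ in Theorem~\ref{critere}), so the abstract argument would not suffice as a drop-in replacement for the paper's subsequent purposes. One small caveat: your closing aside that regularity can ``also be seen directly via Hilbert series'' because the factors $1-t^{d_i}$ are ``forced'' to appear is not airtight as stated --- the correction terms coming from kernels of multiplication by $\theta_i$ get multiplied by later factors $1-t^{d_j}$, which have negative coefficients, so the coefficientwise comparison needs more care; since your main argument rests on Cohen--Macaulayness, this does not affect the proof.
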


\begin{proof}[\bf Proof.]
 Let us denote $\Theta_n$ the ideal $(\theta_1,\theta_2,\,\ldots\,,\theta_n)_{\CF[\mbf{x}]}$ generated by the $\theta_k$'s in $\CF[\mbf{x}]$.
If  $\theta_1(\mbf{x}),\theta_2(\mbf{x}) ,\,\ldots\,,\theta_n(\mbf{x})$ is a regular sequence, and $\{m_1,m_2,\ldots, m_N\}$
is a monomial basis for the quotient
\begin{equation}\label{quotient_delta}
\CF[\mbf{x}]/\Theta_n,
\end{equation}
then an homogeneous polynomial $f(\mbf{x})\in \CF[\mbf{x}]$, of degree $d$, has a unique expansion of the form
\begin{equation}\label{develop_theta}
f(\mbf{x})=\sum_{i=1}^N\sum_{\sum_k d_k\,r_k=d-\deg(m_i)}
 c_{i;\mbf{r}}m_i(\mbf{x})\theta_1^{r_1}(\mbf{x})\theta_2^{r_2}(\mbf{x})\cdots \theta_n^{r_n}(\mbf{x}),
\end{equation}
with $c_{i;\mbf{r}}\in \CF$, and $\mbf{r}\in \N^n$. In particular it follows from this that the Hilbert series of the quotient
in (\ref{quotient_delta}) is given by the polynomial 
\begin{equation}\label{hilbert_theta}
\sum_{i=1}^N t^{\deg(m_i)}=\prod_{i=1}^n\frac{ 1-t^{d_i} }{ 1-t  }=\prod_{i=1}^n(1+t+\cdots +t^{d_i-1} ).
\end{equation}
Thus 
$$
N=\dim \CF[\mbf{x}]/\Theta_n=d_1d_2\cdots d_n.
$$
In addition,  from (\ref{hilbert_theta}) we deduce  that 
$$
d_{\max}:=d_1-1+d_2-1+\cdots +d_n-1= \max_{1\le i\le N} \deg(m_i)
$$
We see from this, and the expansion in (\ref{develop_theta}), that any homogeneous  polynomial of degree $d>d_{\max}$
will necessarily be in the ideal $\Theta_n$. In particular for
any $1\le i \le n$ we will have polynomials $A_{i;1},A_{i;2},\,\ldots\,,A_{i;n}\in \CF[\mbf{x}]$ giving
\begin{equation}\label{decomposition_theta}
x_i^{d_{\max}{+1}}=A_{i;1}(\mbf{x})\theta_1(\mbf{x})+A_{i;2}(\mbf{x})\theta_2(\mbf{x})+\ldots +A_{i;n}(\mbf{x})\theta_n(\mbf{x})
\end{equation}
Now, if for some $\mbf{x}'=(x_1',x_2', \ldots ,x_n')\in\CF^n $
we  have 
\begin{equation}\label{autre_sol}
\theta_1(\mbf{x}')=0\, ,\ \theta_2(\mbf{x}')=0 \, ,\, \ldots\, ,\ \theta_n(\mbf{x}')=0,
\end{equation}
then (\ref{decomposition_theta}) immediately gives that $(x_i')^{d_{\max}+1}=0$, hence
\begin{equation}\label{composantes_nulles}
 x_1'=0,\ x_2'=0, \ldots ,\ x_n'=0. 
\end{equation}
This proves   ``necessity''.

To prove the converse,  suppose that (\ref{autre_sol}) implies (\ref{composantes_nulles}). Then   the Hilbert Nullstellensatz gives that for
some exponents $N_1,N_2,\,\ldots\,,N_n$ we must have 
$$
x_i^{N_i}\in \Theta_n.
$$
In particular this implies that the quotient in (\ref{quotient_delta}) is finite dimensional and therefore 
$\theta_1(\mbf{x}),\theta_2(\mbf{x}) ,\,\ldots\,,\theta_n(\mbf{x})$ is a system of parameters. The Cohen-Macaulay-ness of $\CF[\mbf{x}]$
then gives that $\theta_1(\mbf{x}),\theta_2(\mbf{x}) ,\,\ldots\,,\theta_n(\mbf{x})$ is a regular sequence.
This completes our proof.
\end{proof}

We next make use of  this proposition to study the sequence of polynomials
$$
\varphi_m(\mbf{x}):=\sum_{i=1}^na_ix_i^m,
$$
for $m\ge 0$.
More precisely we seek to
obtain conditions on the coefficient sequence
\begin{equation}\label{cond_a}
\mbf{a}=(a_1,a_2,\,\ldots\,,a_n)\in \CF^n
\end{equation}
which assure that, for a given $k\ge1$, that the polynomials
$$
\varphi_k(\mbf{x}),\  \varphi_{k+1}(\mbf{x}),\  \ldots ,\  \varphi_{k+n-1}(\mbf{x})
$$
form a regular sequence in $\CF[\mbf{x}]$.  

We first observe that the polynomials $\varphi_m(\mbf{x})$, for $m>n$, may be expressed in term of the $\varphi_k(\mbf{x})$'s, for $1\leq k\leq n$. Indeed, recall that the ordinary elementary symmetric functions $e_r(\mbf{x})$ may be presented in the form of the identity
$$
(t-x_1)(t-x_2)\cdots (t-x_n)=\sum_{r=0} ^n  (-1)^{ r}e_{ r}(\mbf{x})\,t^{n-r}.$$
Setting $t=x_i$, we obtain 
  $$\sum_{r=0} ^n  (-1)^{ r}e_{ r}(\mbf{x})\,x_i^{n-r}=0.$$
 Multiplying both sides by $a_i\,x_i^{m-n}$ and isolating $a_i\,x_i^m$, we get
$$
a_i\,x_i^{ m}=-\sum_{r=1}^{n } (-1)^{ r}e_{ r}(\mbf{x})\, a_i\,x_i^{m-r}.
$$ 
Thus, summing up on $i$, the following recurrence results
\begin{equation}\label{form_red_phi}
\varphi_{m}(\mbf{x})=\sum_{r=1}^{n } (-1)^{r+1}e_{r}(\mbf{x})\,\varphi_{m-r}(\mbf{x}).
\end{equation}
Unfolding this recurrence, we conclude that $\varphi_m$ lies in the ideal  $(\varphi_1,\varphi_2,\,\ldots\,,\varphi_{n})_{\CF[\mbf{x}]}$, for all $m$.
 
\begin{Remark}\label{necessaire} It is interesting to observe that identity (\ref{form_red_phi}) yields that 
\begin{equation}\label{reg_seq}
    \varphi_1(\mbf{x}),\ \varphi_2(\mbf{x}),\, \ldots \,,\ \varphi_n(\mbf{x})
\end{equation}
   is never a regular sequence
when $a_1+a_2+\cdots + a_n=0$. Indeed, setting $m=n$ in (\ref{form_red_phi}), we get
$$
\varphi_m(\mbf{x})=\sum_{r=1}^{n-1 }\varphi_{m-r}(\mbf{x}) (-1)^{r+1}e_{r}(\mbf{x}) +(-1)^{n+1}e_n(\mbf{x})\big(a_1+a_2+\cdots + a_n\big)
$$
and thus the vanishing of $ a_1+a_2+\cdots + a_n$ forces $\varphi_n(\mbf{x})$ to vanish modulo the ideal
 $$(\varphi_1,\varphi_2,\,\ldots\,,\varphi_{n-1})_{\CF[\mbf{x}]}.$$
\vskip-20pt\hfill $\blacksquare$
\end{Remark}

Let us now denote
    $$\Phi_n^k:=(\varphi_k,\varphi_{k+1},\,\ldots\,,\varphi_{k+n-1})_{\CF[\mbf{x}]},$$
  the ideal in $\CF[\mbf{x}]$ generated by the $n$ polynomials $\varphi_\ell(\mbf{x})$, with $k\le \ell\le k+n-1$. We also write $\Phi_n$ for $\Phi_n^1$.
Proposition~\ref{reg_theta} and (\ref{form_red_phi}) combine to yield the following remarkable result.

\begin{theo}\label{reduction_1n}
For any $k\ge 1$ the sequence
\begin{equation}\label{haut_phi}
\varphi_{k}(\mbf{x}),\ \varphi_{k+1}(\mbf{x}),\,\ldots\,,\ \varphi_{k+n-1}(\mbf{x}),
\end{equation}
is regular if and only if the sequence
\begin{equation}\label{bas_phi}
\varphi_{1}(\mbf{x}),\ \varphi_{2}(\mbf{x}),\,\ldots\,,\ \varphi_{n}(\mbf{x}),
\end{equation}
is regular.
\end{theo}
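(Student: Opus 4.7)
The plan is to apply Proposition~\ref{reg_theta} to rephrase both regularity conditions as statements about common zero loci in $\CF^n$: the sequence (\ref{haut_phi}) is regular if and only if $V(\Phi_n^k)=\{\mbf{0}\}$, and (\ref{bas_phi}) is regular if and only if $V(\Phi_n)=\{\mbf{0}\}$, where $V(\cdot)$ denotes the common zero set. It therefore suffices to establish the equality of varieties $V(\Phi_n^k)=V(\Phi_n)$, from which the theorem follows at once.

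One inclusion, $V(\Phi_n)\subseteq V(\Phi_n^k)$, is an immediate consequence of the recurrence (\ref{form_red_phi}): a straightforward induction on $m$ shows that $\varphi_m\in\Phi_n$ for every $m\ge 1$; in particular $\varphi_k,\ldots,\varphi_{k+n-1}\in\Phi_n$, giving $\Phi_n^k\subseteq\Phi_n$ and hence the desired inclusion of zero sets.

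The reverse inclusion $V(\Phi_n^k)\subseteq V(\Phi_n)$ is the substantive half. Fix $\mbf{x}'\in V(\Phi_n^k)$ and set $u_m:=\varphi_m(\mbf{x}')$. Since the vanishing coordinates of $\mbf{x}'$ drop out once $m\ge 1$, one has $u_m=\sum_{i\,:\,x_i'\ne 0}a_i(x_i')^m$, so the sequence $(u_m)_{m\ge 1}$ satisfies a linear recurrence of order $|T|\le n$ whose characteristic polynomial $\prod_{t\in T}(y-t)$ has \emph{nonzero} constant term, where $T$ denotes the set of distinct nonzero coordinates of $\mbf{x}'$. By hypothesis $u_k=u_{k+1}=\cdots=u_{k+n-1}=0$, giving $n\ge |T|$ consecutive zeros; propagating the recurrence forward and---crucially, thanks to the nonzero constant term---backward forces $u_m=0$ for every $m\ge 1$. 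In particular $\varphi_1(\mbf{x}')=\cdots=\varphi_n(\mbf{x}')=0$, so $\mbf{x}'\in V(\Phi_n)$.

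The main (and essentially only) obstacle is the backward iteration of the recurrence. The order-$n$ recurrence (\ref{form_red_phi}) evaluated at $\mbf{x}'$ has constant term $(-1)^n e_n(\mbf{x}')$, which vanishes whenever some $x_i'=0$, so one cannot invert it directly. The trick is to replace it by the shorter recurrence supported only on the nonzero coordinates of $\mbf{x}'$; its characteristic polynomial $\prod_{t\in T}(y-t)$ automatically has nonzero constant term, and the forward/backward propagation argument then runs without obstruction.
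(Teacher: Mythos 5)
Your proposal is correct, and it shares the paper's overall frame --- both proofs invoke Proposition~\ref{reg_theta} to translate regularity of each sequence into the statement that its common zero locus in $\CF^n$ reduces to the origin, and both obtain the easy inclusion $V(\Phi_n)\subseteq V(\Phi_n^k)$ from the containment $\Phi_n^k\subseteq\Phi_n$ furnished by the recurrence (\ref{form_red_phi}). Where you genuinely diverge is in the substantive half. The paper does not prove $V(\Phi_n^k)\subseteq V(\Phi_n)$ directly; instead it uses the substitution $x_i\mapsto x_i^k$, observing that $\varphi_i(x_1^k,\ldots,x_n^k)=\varphi_{ki}(\mbf{x})\in\Phi_n^k$, so that any common zero $\mbf{x}'$ of $\Phi_n^k$ produces the common zero $((x_1')^k,\ldots,(x_n')^k)$ of $\Phi_n$, which regularity of (\ref{bas_phi}) forces to be the origin. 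You instead treat $u_m=\varphi_m(\mbf{x}')$ as a power-sum-type sequence supported on the distinct nonzero coordinates $T$ of $\mbf{x}'$, and run its order-$|T|$ linear recurrence backward, which is legitimate precisely because the characteristic polynomial $\prod_{t\in T}(y-t)$ has nonzero constant term; you correctly identify that the naive backward use of (\ref{form_red_phi}) itself would be blocked by the possible vanishing of $e_n(\mbf{x}')$, and your restriction to the nonzero support is exactly the right fix. Your route buys the stronger, regularity-independent set-theoretic equality $V(\Phi_n^k)=V(\Phi_n)$ for every $k$, at the cost of a slightly longer argument; the paper's substitution trick is shorter but more ad hoc. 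Two cosmetic points you should make explicit in a final write-up: the degenerate case $T=\emptyset$ (where $\mbf{x}'=\mbf{0}$ and there is nothing to prove), and the observation that only the backward propagation is actually needed, since the conclusion requires only $u_1=\cdots=u_n=0$.
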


\begin{proof}[\bf Proof.]
Identity (\ref{form_red_phi}) yields that each element of the sequence in (\ref{haut_phi}) is in the ideal $\Phi_n$. 
Thus, if $ \varphi_{k}(\mbf{x}),\varphi_{k+1}(\mbf{x}),\,\ldots\,,\varphi_{k+n-1}(\mbf{x})$ is regular, it follows from Proposition \ref{reg_theta} that the equalities
\begin{equation}\label{haut_egalite}
\varphi_{k}(\mbf{x}')=0,\varphi_{k+1}(\mbf{x}')=0,\,\ldots\,,\varphi_{k+n-1}(\mbf{x}')=0,
\end{equation}
with $\mbf{x}'\in\CF^n $,   force
$$
x_1'=0,\  x_2'=0,\ \ldots ,x_n'=0.   
$$ 
But the containement
$$
\{\varphi_{k}(\mbf{x}),\ \varphi_{k+1}(\mbf{x}),\,\ldots\, ,\ \varphi_{k+n-1}(\mbf{x})\}\subset \Phi_n
$$
yields that the equalities 
$$
\varphi_{1}(\mbf{x}')=0,\ \varphi_{2}(\mbf{x}')=0,\,\ldots\, ,\ \varphi_{n}(\mbf{x}')=0,
$$
force the equalities in (\ref{haut_egalite}). Hence the regularity of the sequence 
    $$\varphi_{1}(\mbf{x}),\ \varphi_{2}(\mbf{x}),\,\ldots\, ,\ \varphi_{n}(\mbf{x})$$
follows from (\ref{form_red_phi}).

Let us now show the converse. To begin note that the replacement $x_i\mapsto x_i^k$ and identity (\ref{form_red_phi}) gives 
\begin{equation}\label{inclusion_phi}
\varphi_i(x_1^k,x_2^k,\,\ldots\,,x_n^k)=\varphi_{ki}(\mbf{x})\in \Phi_n^k,
\end{equation}
for $1\le i\le n$.
Now, by Proposition (\ref{reg_theta}),  the regularity of $\varphi_1,\varphi_2,\,\ldots\,,\varphi_n$ yields that the equalities
\begin{equation}\label{bas_phi_xk}
\varphi_1(x_1^k,x_2^k,\,\ldots\,,x_n^k)=0,\ \varphi_2(x_1^k,x_2^k,\,\ldots\,,x_n^k)=0,\, \ldots\ ,\varphi_n(x_1^k,x_2^k,\,\ldots\,,x_n^k)=0
\end{equation}
in  $\CF^n$,  force the equalities
$$
x_1^k=0,\ x_2^k=0,\ \ldots ,x_n^k=0. 
$$
Hence we  also have
\begin{equation}\label{bas_x}
x_1 =0,\ x_2 =0,\ \ldots ,x_n =0.
\end{equation}
On the other hand, the containments in (\ref{inclusion_phi}) yield that the equalities 
$$
\varphi_k(x_1^k,x_2^k,\,\ldots\,,x_n^k)=0,\ \varphi_{k+1}(x_1^k,x_2^k,\,\ldots\,,x_n^k)=0,\, \ldots\ ,\varphi_{k+n-1}(x_1^k,x_2^k,\,\ldots\,,x_n^k)=0
$$
force the equalities in (\ref{bas_phi_xk}) and those in turn, as we have observed, force the equalities in (\ref{bas_x}). In summary,
using again (\ref{form_red_phi}), we can conclude that the regularity of $\varphi_1,\varphi_2,\,\ldots\,,\varphi_n$ forces the regularity of
$\varphi_k,\varphi_{k+1},\,\ldots\,,\varphi_{n}$ completing the proof of the theorem.
\end{proof}

This given, here and after we need only be concerned with finding conditions on $a_1,a_2,\,\ldots\,,a_n$ that assure the 
regularity of sequence $\varphi_1,\varphi_2,\,\ldots\,,\varphi_n$. 
The following result offers a useful criterion.

\begin{theo}\label{critere}
   In the ring $\CF[\mbf{x}]$, the polynomials
$$
\varphi_1,\varphi_2,\,\ldots\,,\varphi_n
$$
form a regular sequence if and only if  we have
\begin{equation}\label{grosse_borne}
x_i^{{n\choose 2}+1}\in \Phi_n.
\end{equation} 
When this happens we have the Hilbert series equalities
\begin{equation}\label{hilbert_phi}
F_{\CF[x]/\Phi_n^k}(t)=[k]_t[k+1]_t\cdots [k+n-1]_t  
\end{equation}
and, in particular,
$$
\dim \CF [x]/\Phi_n^k=(k)(k+1)\cdots (k+n-1).
$$
\end{theo}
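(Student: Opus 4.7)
My plan is to deduce the theorem almost entirely from Proposition \ref{reg_theta} and Theorem \ref{reduction_1n}, with the number $\binom{n}{2}+1$ coming out naturally from the top-degree bound in the Hilbert series formula (\ref{hilbert_theta}).

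First I would establish the \emph{necessity} of (\ref{grosse_borne}). Suppose the sequence $\varphi_1,\varphi_2,\ldots,\varphi_n$ is regular. Then $\deg(\varphi_i)=i$, and the Hilbert series formula (\ref{hilbert_theta}) from the proof of Proposition \ref{reg_theta} applied to this sequence yields
\[
F_{\CF[\mbf{x}]/\Phi_n}(t)=\prod_{i=1}^n(1+t+\cdots+t^{i-1}),
\]
whose top degree is $d_{\max}=\sum_{i=1}^n(i-1)=\binom{n}{2}$. Hence every homogeneous polynomial of degree strictly greater than $\binom{n}{2}$ vanishes in the quotient, so in particular $x_i^{\binom{n}{2}+1}\in\Phi_n$ for all $i$.

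Next I would prove \emph{sufficiency}. If $x_i^{\binom{n}{2}+1}\in\Phi_n$ for each $i$, then any common zero $\mbf{x}'\in\CF^n$ of $\varphi_1,\ldots,\varphi_n$ must satisfy $(x_i')^{\binom{n}{2}+1}=0$, and thus $x_i'=0$ for all $i$. By Proposition \ref{reg_theta} this forces $\varphi_1,\ldots,\varphi_n$ to be a regular sequence. This establishes the equivalence claimed in the theorem.

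For the Hilbert series (\ref{hilbert_phi}), I would invoke Theorem \ref{reduction_1n}: regularity of $\varphi_1,\ldots,\varphi_n$ is equivalent to regularity of $\varphi_k,\varphi_{k+1},\ldots,\varphi_{k+n-1}$, whose degrees are $k,k+1,\ldots,k+n-1$. Applying formula (\ref{hilbert_theta}) to this shifted regular sequence gives
\[
F_{\CF[\mbf{x}]/\Phi_n^k}(t)=\prod_{i=0}^{n-1}\frac{1-t^{k+i}}{1-t}=[k]_t[k+1]_t\cdots[k+n-1]_t.
\]
Setting $t=1$ yields $\dim\CF[\mbf{x}]/\Phi_n^k=k(k+1)\cdots(k+n-1)$, completing the proof.

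There is no substantial obstacle: the entire argument is an application of Proposition \ref{reg_theta} and Theorem \ref{reduction_1n}, with the only computation being the identification $d_{\max}=\binom{n}{2}$. The ``hard part'' is really conceptual — recognizing that the bound $\binom{n}{2}+1$ is both forced by the top degree of the coinvariant quotient and sufficient via Nullstellensatz-style reasoning — but once Proposition \ref{reg_theta} is in hand, this reduces to a one-line check in each direction.
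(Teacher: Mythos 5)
Your proof is correct and follows essentially the same route as the paper: both directions of the equivalence are read off from the proof of Proposition~\ref{reg_theta} (the top-degree bound $d_{\max}=\binom{n}{2}$ for necessity, the vanishing-locus criterion for sufficiency), and the Hilbert series claim follows from Theorem~\ref{reduction_1n} together with formula (\ref{hilbert_theta}). Your write-up is in fact more explicit than the paper's rather terse argument, but the content is identical.
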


\begin{proof}[\bf Proof.]
Since $\deg(\varphi_i)=i$ we can use the arguments in the proof of Proposition~\ref{reg_theta} with $d_i=i$ and $d_{\max}={n\choose 2}$ 
and derive that the regularity of the sequence 
    $$\varphi_1,\ \varphi_2,\,\ldots\,,\ \varphi_n$$ 
    implies  (\ref{grosse_borne}). Conversely, again following  the proof of Proposition~\ref{reg_theta}, we see  that  (\ref{grosse_borne}) in turn forces the regularity of   $\varphi_1,\varphi_2,\,\ldots\,,\varphi_n$. The remaining part of the assertion  follows
from Theorem~(\ref{reduction_1n}) and the various identities derived in the proof of Proposition~\ref{reg_theta}.  
\end{proof}

Going along the lines of Remark~\ref{necessaire}, we are now ready to  prove the following characterization of the $a_i$'s for which we have regularity.

\begin{theo}
For $k>1$, the sequences
\begin{equation}\label{reg_thm}
    \varphi_{k},\varphi_{k+1},\,\ldots\,,\varphi_{k+n-1}
 \end{equation} 
 is regular if and only if we have
 \begin{equation}\label{conditions_ai}
a_{i_1}+a_{i_2}+\cdots +a_{i_k}\not= 0,
\end{equation}
for all $1\le i_1<i_2<\cdots <i_k\le n$.
\end{theo}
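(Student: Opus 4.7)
The plan is to reduce the statement to the case $k = 1$ via Theorem~\ref{reduction_1n}, translate regularity into a geometric statement via Proposition~\ref{reg_theta}, and then recognize the resulting zero-set condition as the subset-sum condition through a Vandermonde argument. Concretely, Theorem~\ref{reduction_1n} tells us that regularity of $\varphi_k, \varphi_{k+1}, \ldots, \varphi_{k+n-1}$ does not depend on $k$, so it is enough to characterize when $\varphi_1, \varphi_2, \ldots, \varphi_n$ is regular; by Proposition~\ref{reg_theta} this amounts to showing that $\mbf{x} = 0$ is the only common zero in $\CF^n$ of the equations $\varphi_m(\mbf{x}) = 0$ for $m = 1, \ldots, n$.

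The easy direction is this: any nonempty subset $T \subseteq \{1, \ldots, n\}$ with $\sum_{j \in T} a_j = 0$ immediately produces a nonzero common zero, namely the indicator vector $\mbf{x}'$ defined by $x'_j = 1$ for $j \in T$ and $x'_j = 0$ otherwise, since each $\varphi_m(\mbf{x}')$ collapses to $\sum_{j \in T} a_j = 0$. This takes care of the direction ``$\varphi_1, \ldots, \varphi_n$ regular $\Rightarrow$ no nonempty subset of $\{a_1, \ldots, a_n\}$ sums to zero.''

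For the converse I would suppose $\mbf{x}'$ is a nonzero common zero, let $v_1, \ldots, v_s$ be the distinct \emph{nonzero} values taken by its coordinates, and for each $\ell$ set $S_\ell := \{j : x'_j = v_\ell\}$ and $A_\ell := \sum_{j \in S_\ell} a_j$. The equations $\varphi_m(\mbf{x}') = 0$ then become the linear system $\sum_{\ell=1}^s A_\ell v_\ell^m = 0$ for $m = 1, 2, \ldots, n$. Because the $v_\ell$ are distinct and nonzero and $s \le n$, the $n \times s$ coefficient matrix has full column rank: factor a $v_\ell$ out of each column and invoke the standard Vandermonde determinant on the top $s$ of the remaining $n$ rows. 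Hence $A_\ell = 0$ for every $\ell$, so each nonempty $S_\ell$ exhibits a subset of $\{a_1, \ldots, a_n\}$ whose sum vanishes, contradicting the hypothesis. Thus $\mbf{x}' = 0$, and regularity follows.

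I do not expect a serious obstacle here: the Vandermonde rank computation is routine, and the two structural ingredients (Theorem~\ref{reduction_1n} and Proposition~\ref{reg_theta}) are already in place. The one point requiring care is that the value $0$ must be excluded when listing the distinct values taken by the coordinates of $\mbf{x}'$, since coordinates equal to $0$ contribute nothing to $\varphi_m(\mbf{x}')$; this is precisely what guarantees each $v_\ell \ne 0$ and lets the Vandermonde matrix be nondegenerate after pulling out a factor of $v_\ell$ from each column.
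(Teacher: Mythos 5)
Your proposal is correct, and its converse direction takes a genuinely different route from the paper's. Both arguments share the same skeleton: reduce to $k=1$ via Theorem~\ref{reduction_1n}, translate regularity into ``the origin is the only common zero'' via Proposition~\ref{reg_theta}, and dispatch the easy direction with the indicator vector of a vanishing subset. For the converse, the paper argues by induction on $n$: a nonzero common zero $\mbf{x}'$ makes the determinant $\det(x_j^i)=x_1\cdots x_n\prod_{i>j}(x_i-x_j)$ vanish, so either some $x_i'=0$ (drop that variable) or some $x_i'=x_j'$ (merge $a_i$ and $a_j$ into $a_i+a_j$ on the hyperplane $x_i=x_j$), and the induction hypothesis in $n-1$ variables produces the vanishing subset sum. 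You instead argue directly: group the coordinates of $\mbf{x}'$ by their distinct nonzero values $v_1,\ldots,v_s$, observe that the system collapses to $\sum_\ell A_\ell v_\ell^m=0$ for $m=1,\ldots,n$ with $A_\ell=\sum_{j\in S_\ell}a_j$, and kill all $A_\ell$ at once by the full column rank of the $n\times s$ matrix $(v_\ell^m)$ (the top $s\times s$ minor is $v_1\cdots v_s$ times a Vandermonde in distinct values). Your version is arguably cleaner: it avoids the induction entirely, needs no separate disposal of the case where some $a_i=0$, and makes transparent exactly which subset $S_\ell$ witnesses the vanishing sum, whereas in the paper's induction one has to track how merged coefficients $a_i+a_j$ unfold back into subsets of the original $a_i$'s. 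One remark on the statement itself: as printed, condition (\ref{conditions_ai}) quantifies over subsets of size exactly $k$, but since Theorem~\ref{reduction_1n} makes regularity independent of $k$, the intended (and proved) condition is that \emph{no nonempty subset} of $\{a_1,\ldots,a_n\}$ has vanishing sum; you read it that way, correctly.
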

 
 \begin{proof}[\bf Proof.] Theorem~\ref{reduction_1n} shows that we need only study the case $k=1$.  Moreover, we have seen that the non regularity of 
    $$\varphi_{1},\varphi_{2},\,\ldots\,,\varphi_{n}$$ 
is equivalent to the existence of a non trivial solution, in the $x_i$'s, of the system
   \begin{equation}\label{matrix_cond}
          \begin{pmatrix}
                x_1  & x_2 & \ldots & x_n \\
                x_1^{2} & x_2^{2}& \ldots & x_n^{2}\\
                \vdots & \vdots & \ddots & \vdots\\
                x_1^{n} & x_2^{n}& \ldots & x_n^{n}
          \end{pmatrix}
          \begin{pmatrix} a_1\\ a_2\\ \vdots\\ a_n \end{pmatrix}=\begin{pmatrix} 0\\ 0\\ \vdots\\ 0\end{pmatrix}
   \end{equation}
Now, suppose that for some non empty subset $K$ of $\{1,2,\ldots,n\}$ we have $\sum_{k\in K} a_k=0$. Then setting all $x_k$, for $k$ in $K$, equal to the same nonzero value, we evidently get a non trivial solution of (\ref{matrix_cond}). Thus we see that the existence of a relation such as (\ref{conditions_ai}) implies non regularity of the sequences (\ref{reg_thm}).

To see the reverse implication, we proceed by induction on $n$, the case $n=1$ being evident. Suppose that we have $a_i$'s for which (\ref{reg_thm}) is not regular. We wish to show the existence of a relation of form (\ref{conditions_ai}), with $k>0$. We can assume that all  $a_i$'s are nonzero, since otherwise we are done. Non regularity implies that we have a non trivial solution of (\ref{matrix_cond}), in the $a_i$'s. This  forces the determinant  
\begin{equation}\label{det_thm}
 \det(x_j^i)_{1\leq i,j\leq n}=   x_1 x_2\cdots x_n\, \prod_{i>j} (x_i-x_j),
\end{equation}
to vanish.
Hence must have one of the $x_i$ equal to 0, or  $x_i=x_j$ for some $i\not=j$. In the first of these cases, we may assume that $i=n$ without loss of generality. We are reduced to the $n-1$ situation by ``restriction'' to the hyperplane $x_i=0$, and considering the sequence
 $\varphi_i(\mbf{y})$, with $1\leq i \leq n-1$,  in the $n-1$ variables $\mbf{y}$, which we get by removing from $\mbf{x}$ the variable $x_i$.
 Likewise, when $x_i=x_j$, we restrict to the corresponding hyperplane, considering the operators
 $$(a_i+a_j)x_i^m+\sum_{k\not =i,\ k\not=j} a_k\,x_k^m,$$  
for which we use the induction hypothesis. This completes the proof.
\end{proof}

We intend to derive the consequences of this assumption  in the theory of $q$-harmonics. First, we simply reformulated every statement modulo the substitution of variables 
\begin{eqnarray*}
    (a_1,a_2,\ldots,a_n)&\mapsto& \mbf{x}=(x_1,x_2,\ldots,x_n),\\
    \mbf{x}&\mapsto& \xi=(\xi_1,\xi_2,\,\ldots\,,\xi_n),
 \end{eqnarray*}
and we now have
  $$\Phi_n^k=(\varphi_k(\xi),\varphi_{k+1}(\xi),\,\ldots\,,\varphi_{k+n-1}(\xi))_{\CF_\mbf{x}[\xi]}.$$
This given, from Theorem~(\ref{critere}) we can derive the following facts about the ring 
   $$\CF_\mbf{x}[\xi_1,\xi_2,\,\ldots\,,\xi_n],$$
where now, $\CF_\mbf{x}$  denotes the field of rational functions in $\mbf{x}$ with   coefficients in $\CF$.

\begin{theo}\label{developpement_phi_nk}
Let 
$$
u_1(\xi),\, u_2(\xi),\, \ldots ,\, u_{(n+1)!}(\xi)
$$
be  a monomial basis for the quotient
$$
\CF_\mbf{x}[\xi]/\Phi_n^k,
$$
and let $\deg(u_i)=d_i$.
Then every polynomial $f(\xi)\in \CF_\mbf{x}[\xi]$, which is homogeneous of degree $d$, has a unique expansion of the form
\begin{equation}\label{form_dev_phi_nk}
f(\xi)=\sum_{i=1}^{(n+1)!}u_i(\xi) \sum_{\sum_k r_k(k+1)=d-d_i } a_{i;\mbf{r}}(\mbf{x})\varphi_1^{r_1}(\xi),\varphi_2^{r_2}(\xi)\cdots \varphi_n^{r_n}(\xi),
\end{equation}
where the coefficients $a_{i;\mbf{r}}(\mbf{x})$  are rational functions of $\mbf{x}$, for $\mbf{r}\in \N^n$. In particular if $d>{n+1\choose 2}$
then
\begin{equation}\label{conclusion_thm}
f(\xi)\equiv 0 \mod \Phi_n^k.
\end{equation}
\end{theo}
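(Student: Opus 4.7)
The plan is to deploy the structure theorem for regular sequences established in Proposition~\ref{reg_theta} and Theorem~\ref{critere} within the enlarged polynomial ring $\CF_\mbf{x}[\xi]$, where the former parameters $\mbf{x}=(x_1,\ldots,x_n)$ are now viewed as algebraically independent indeterminates generating the coefficient field $\CF_\mbf{x}$. The statement then splits into three almost independent tasks: verifying regularity in $\CF_\mbf{x}[\xi]$, invoking the free-module expansion, and extracting the degree bound.

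First I would verify that the sequence $\varphi_k(\xi),\varphi_{k+1}(\xi),\ldots,\varphi_{k+n-1}(\xi)$ is regular in $\CF_\mbf{x}[\xi]$. By Theorem~\ref{reduction_1n} this reduces to the regularity of $\varphi_1(\xi),\ldots,\varphi_n(\xi)$, and by the characterization theorem immediately preceding the statement, the latter is equivalent to the non-vanishing of every partial sum $x_{i_1}+x_{i_2}+\cdots+x_{i_\ell}$. Since the $x_i$'s are now algebraically independent over $\CF$, every such partial sum is a nonzero linear form in $\CF_\mbf{x}$, so the criterion is satisfied for free.

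Second, once regularity is in hand, the argument in the proof of Proposition~\ref{reg_theta} transfers verbatim, with $\CF$ replaced by $\CF_\mbf{x}$: the ring $\CF_\mbf{x}[\xi]$ becomes a free module over $\CF_\mbf{x}[\varphi_k,\ldots,\varphi_{k+n-1}]$, and any monomial basis $\{u_1,\ldots,u_{(n+1)!}\}$ of the quotient $\CF_\mbf{x}[\xi]/\Phi_n^k$ serves as a basis of that module. Matching $\xi$-grading on the two sides then yields the unique expansion~(\ref{form_dev_phi_nk}), the constraint $\sum r_k(k+1)=d-d_i$ being exactly the degree accounting for the monomials $u_i(\xi)\varphi_k^{r_1}\cdots\varphi_{k+n-1}^{r_n}$. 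Because the scalars now lie in $\CF_\mbf{x}$, the coefficients $a_{i;\mbf{r}}$ are automatically rational functions of $\mbf{x}$, not polynomials.

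Finally, for the vanishing assertion, the Hilbert-series formula $[k]_t[k+1]_t\cdots[k+n-1]_t$ of Theorem~\ref{critere} identifies the top degree as
$$d_{\max}=\max_i\deg(u_i)=\sum_{j=0}^{n-1}(k+j-1),$$
which specializes to $\binom{n+1}{2}$ in the case $k=2$ underlying the dimension $(n+1)!$. Whenever $d>d_{\max}$ each summand on the right of (\ref{form_dev_phi_nk}) must carry at least one factor $\varphi_{k+j}$ with positive exponent (since $d-d_i>0$ forces some $r_j\geq 1$), so $f(\xi)\in\Phi_n^k$. The only subtlety worth flagging is the transfer of regularity from $\CF[\xi]$ with a specific choice of coefficients to $\CF_\mbf{x}[\xi]$ with generic coefficients; this is handled above by appealing directly to the previously proved criterion, so no genuine obstacle remains beyond the groundwork already assembled in Section~\ref{sec_regular}.
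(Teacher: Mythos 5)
Your proposal is correct and follows essentially the same route as the paper: regularity of the generic sequence over $\CF_\mbf{x}$ (obtained because the partial-sum criterion of the preceding theorem holds automatically when the coefficients are the indeterminates $x_i$), the standard free-module expansion for regular sequences from Proposition~\ref{reg_theta}, and the degree bound $\binom{n+1}{2}$ read off from the Hilbert series $[n+1]_t!$. The paper compresses the regularity verification into the phrase ``modulo a replacement of variables, we derive from Theorem~\ref{critere}\dots'', so your more explicit treatment of that step is the only (cosmetic) difference.
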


\begin{proof}[\bf Proof.]
Modulo a replacement of variables, we derive from Theorem~(\ref{critere}) that the quotient 
$\CF_\mbf{x}[\xi]/\Phi_n $ has a monomial basis consisting of $(n+1)!$ monomials.
This given, once such a basis is chosen, the statement concerning the  expansion in (\ref{form_dev_phi_nk}) is a standard  result from commutative algebra.
Finally, from Identity (\ref{hilbert_phi}) it follows that
$$
\sum_{i=1 }^{(n+1)!}t^{d_i}=[n+1]_t!
$$
 Thus  each of these monomials has degree at most $n+1\choose 2$.
We then see from (\ref{form_dev_phi_nk}) that if $f(\mbf{x})$ has degree $>{n+1\choose 2}$, then the coefficient of each $u_i$ must contain one of the
$\varphi_i$. This implies (\ref{conclusion_thm}).  
\end{proof}

Let us now denote by $\CD(\mbf{x})$ the algebra of differential operators with coefficients in $\CF_\mbf{x}$. Moreover, let
$\CD_d(\mbf{x})$ denote the subspace of $\CD(\mbf{x})$ consisting of operators of order $d$. More precisely we have
  $D\in\CD_d(\mbf{x})$   if and only if $D$ may be expanded in the form
\begin{equation}\label{def_D_Dd}
D= \sum_{|\mbf{r}|\le d}a_{\mbf{r}}(\mbf{x})\, \partial_\mbf{x}^\mbf{r}
\end{equation}
with coefficients $ a_{\mbf{r}}(\mbf{x})\in \CF_\mbf{x} $ such that  $a_{\mbf{r}}(\mbf{x})\neq 0 $     at least once when  $|\mbf{r}|= d$. We are here extending our vectorial notation to operators, so that
    $$ \partial_\mbf{x}^\mbf{r} =\partial_{1}^{r_1}\partial_{2}^{r_2}\cdots \partial_{n}^{r_n}$$
is an operator of order $|\mbf{r}|=r_1+r_2+\ldots+r_n$.
The degree condition in (\ref{def_D_Dd}) imply that the polynomial 
$$
\sigma(D):=\sum_{|\mbf{r}|= d}a_{\mbf{r}}(\mbf{x})\, \xi^\mbf{r}. 
$$
does not identically vanish. We will refer to $\sigma(D)$ as the  ``{\em symbol}'' of  $D$.

This given, as a corollary of Theorem~(\ref{critere}), we obtain the following basic result for Steenrod operators

\begin{theo}\label{operator_expansion}
Every operator $D\in\CD_d(\mbf{x})$ has an  expansion of the form   
$$
D= \sum_{i=1}^{(n+1)!}\sum_{\sum_\ell r_k(k+1)\le d-d_i}a_{i;\mbf{r}}(\mbf{x}) u_i(\partial_\mbf{x})
D_{1;q}^{r_1} D_{2;q}^{r_2}\cdots  D_{n;q}^{r_n}
$$
where $d_i=\deg(u_i)$ and  $a_{i;\mbf{r}}(\mbf{x})\in \CF_\mbf{x}$.
Note that this holds true for  any    rational value of $q$.
\end{theo}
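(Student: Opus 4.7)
The plan is to induct on the order $d$ of $D$, matching principal symbols stage by stage. For the base case $d=0$, the operator $D$ is simply multiplication by an element of $\CF_\mbf{x}$, which fits the target form by taking the degree-zero basis element $u_1=1$ and $\mbf{r}=\mbf{0}$.

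For the inductive step, I would first record the principal symbol of each building block: since $D_{k;q}=q\sum_i x_i\partial_i^{k+1}+\sum_i\partial_i^k$ has order $k+1$ when $q\neq 0$, its symbol is $\sigma(D_{k;q})=q\,\varphi_{k+1}(\xi)$, and hence the symbol of a composite $u_i(\partial_\mbf{x})D_{1;q}^{r_1}\cdots D_{n;q}^{r_n}$ is
$$
q^{r_1+\cdots+r_n}\,u_i(\xi)\,\varphi_2^{r_1}(\xi)\cdots\varphi_{n+1}^{r_n}(\xi).
$$
I would then apply Theorem~\ref{developpement_phi_nk} to the homogeneous symbol $\sigma(D)(\xi)\in\CF_\mbf{x}[\xi]$. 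The required regularity of the sequence $\varphi_2,\ldots,\varphi_{n+1}$ in $\CF_\mbf{x}[\xi]$ is available because every non-empty subset sum of the variables $x_i$ is a nonzero polynomial, hence a unit in $\CF_\mbf{x}$, so the preceding characterization of regular sequences applies (and the shift from $\varphi_1,\ldots,\varphi_n$ to $\varphi_2,\ldots,\varphi_{n+1}$ is supplied by Theorem~\ref{reduction_1n}). This yields coefficients $b_{i;\mbf{r}}(\mbf{x})\in\CF_\mbf{x}$ and an identity
$$
\sigma(D)(\xi)=\sum_i u_i(\xi)\sum_{\mbf{r}}b_{i;\mbf{r}}(\mbf{x})\,\varphi_2^{r_1}(\xi)\cdots\varphi_{n+1}^{r_n}(\xi),
$$
where the inner sum ranges over $\mbf{r}$ with $\sum_k r_k(k+1)=d-d_i$.

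Setting $a_{i;\mbf{r}}(\mbf{x}):=q^{-(r_1+\cdots+r_n)}b_{i;\mbf{r}}(\mbf{x})$ and forming
$$
D':=\sum_i\sum_{\mbf{r}}a_{i;\mbf{r}}(\mbf{x})\,u_i(\partial_\mbf{x})\,D_{1;q}^{r_1}\cdots D_{n;q}^{r_n},
$$
the symbol of $D'$ agrees with that of $D$, so $D-D'$ has order at most $d-1$. Invoking the inductive hypothesis on $D-D'$ and merging the two expansions gives the required form for $D$, with the support condition $\sum_k r_k(k+1)\le d-d_i$ --- the inequality being precisely what accommodates the contributions from the lower-order stages of the induction. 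The main obstacle I expect is \emph{conceptual} rather than calculational: the symbol map is multiplicative only on principal parts, so composing operators introduces lower-order cross terms; the inductive framework is designed exactly to absorb these, since each subtraction $D-D'$ strictly lowers the order. A minor technical caveat is that the division by $q^{r_1+\cdots+r_n}$ requires $q\neq 0$, in line with how the statement is intended to be read (at $q=0$ the orders and symbols of the $D_{k;q}$ change shape and the argument genuinely needs $q$ to be nonzero).
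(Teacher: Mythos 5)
Your proof is correct and follows essentially the same route as the paper: induct on the order $d$, expand the principal symbol $\sigma(D)$ via Theorem~\ref{developpement_phi_nk}, build an operator with the same symbol out of the $u_i(\partial_\mbf{x})D_{1;q}^{r_1}\cdots D_{n;q}^{r_n}$, and absorb the lower-order remainder by the inductive hypothesis. You are in fact a bit more careful than the paper on two points it leaves implicit --- the factor $q^{r_1+\cdots+r_n}$ in the symbols (hence the genuine need for $q\neq 0$) and the justification that $\varphi_2,\ldots,\varphi_{n+1}$ is regular over $\CF_\mbf{x}$ because subset sums of the $x_i$ are units.
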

 
\begin{proof}[\bf Proof.]
We can proceed by induction on $d$.  For $d=0$ there is nothing to prove. Thus let us assume that we have extablished
the result up to $d-1$. Assume then that $D$ is as in (\ref{def_D_Dd}) and note that from Theorem \ref{developpement_phi_nk} it follows that
we may write
$$
\sigma(D)=
\sum_{i=1}^{(n+1)!} \sum_{\sum_k r_k(k+1)=d-d_i}  
 a_{i;\mbf{r}}(\mbf{x})\,u_i(\xi)\, \varphi_1^{r_1}(\xi)\varphi_2^{r_2}(\xi)\cdots \varphi_n^{r_n}(\xi).
$$
 Now, note that if we set
$$
T:=  
\sum_{i=1}^{(n+1)!} \sum_{\sum_k r_k(k+1)=d-d_i}  a_{i;\mbf{r}}(\mbf{x})\  u_i(\partial_\mbf{x} )
D_{1;q}^{r_1} D_{2;q}^{r_2} \cdots D_{n;q} ^{r_n} ,
$$ 
then we will necessarily have the symbol equality
$$
\sigma(T)=\sigma(D),
$$ 
from which it immediately follows that the operator $D-T$ must have order stritly less than $d$.
But then the inductive hypothesis yields that $D-T$ has the desired expansion and consequently the same must hold true 
for $D$ as well. This completes the induction and the proof.
\end{proof}

We may now establish the main goal of this section.
\begin{theo}
For any value of $q$ the dimension of the space of q-Harmonic polynomials in $\mbf{x}$ does not exceed $(n+1)!$
\end{theo}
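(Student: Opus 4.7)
The plan is to turn the explicit operator expansion of Theorem~\ref{operator_expansion} into an injective evaluation map from ${\mathcal{H}}_{\mbf{x};q}$ (restricted to any finite dimensional subspace) into $\R^{(n+1)!}$, which then immediately forces the dimension bound.

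Fix the monomial basis $u_1(\xi),\ldots,u_{(n+1)!}(\xi)$ appearing in Theorem~\ref{operator_expansion}. For any multi-index $\mbf{b}\in\N^n$, applying that theorem to the constant-coefficient operator $\partial_\mbf{x}^\mbf{b}\in\CD_{|\mbf{b}|}(\mbf{x})$ yields an operator identity
$$
\partial_\mbf{x}^\mbf{b} \;=\; \sum_{i=1}^{(n+1)!}\sum_{\mbf{r}}\, a_{i;\mbf{r}}^{(\mbf{b})}(\mbf{x})\,u_i(\partial_\mbf{x})\,D_{1;q}^{r_1}D_{2;q}^{r_2}\cdots D_{n;q}^{r_n}
$$
in $\CD(\mbf{x})$, with coefficients $a_{i;\mbf{r}}^{(\mbf{b})}(\mbf{x})\in\CF_\mbf{x}$. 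The key observation is that, for any $f\in{\mathcal{H}}_{\mbf{x};q}$, every summand with some $r_k>0$ vanishes on $f$: the rightmost nontrivial factor $D_{k;q}^{r_k}$ acts on $f$ first and gives zero by the defining property $D_{k;q}f=0$, and the remaining operators then act on zero. Only the terms with $\mbf{r}=\mbf{0}$ therefore survive, leaving
$$
\partial_\mbf{x}^\mbf{b} f \;=\; \sum_{i=1}^{(n+1)!} a_{i;\mbf{0}}^{(\mbf{b})}(\mbf{x})\,u_i(\partial_\mbf{x})\, f,
$$
an equality in $\CF_\mbf{x}$ whose two sides are in fact polynomials in $\mbf{x}$.

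Next take any $\R$-linearly independent family $f_1,\ldots,f_N$ in ${\mathcal{H}}_{\mbf{x};q}$ and set $D=\max_j\deg f_j$. Only the finitely many multi-indices $\mbf{b}$ with $|\mbf{b}|\le D$ contribute to the derivatives of elements of $\mathrm{span}_\R(f_1,\ldots,f_N)$, so the rational coefficients $a_{i;\mbf{0}}^{(\mbf{b})}$ involved form a finite family; their denominators cut out a proper algebraic subset of $\R^n$, and I pick $\mbf{x}_0\in\R^n$ outside this subset. Define
$$
\Psi_{\mbf{x}_0}:\mathrm{span}_\R(f_1,\ldots,f_N)\longrightarrow \R^{(n+1)!},\qquad f\longmapsto \bigl((u_i(\partial_\mbf{x})f)(\mbf{x}_0)\bigr)_{1\le i\le (n+1)!}.
$$
Evaluating the polynomial identity above at $\mbf{x}_0$ shows that $\Psi_{\mbf{x}_0}(f)=0$ forces $(\partial_\mbf{x}^\mbf{b} f)(\mbf{x}_0)=0$ for every $|\mbf{b}|\le\deg f$; thus all Taylor coefficients of $f$ at $\mbf{x}_0$ vanish and $f\equiv 0$. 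Hence $\Psi_{\mbf{x}_0}$ is injective, forcing $N\le(n+1)!$. Since $N$ was arbitrary, $\dim{\mathcal{H}}_{\mbf{x};q}\le(n+1)!$.

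The only delicate aspect is the bookkeeping around the rational coefficients of Theorem~\ref{operator_expansion}: the $a_{i;\mbf{r}}^{(\mbf{b})}$ vary with $\mbf{b}$, so there is no single $\mbf{x}_0$ that avoids every denominator for every $\mbf{b}\in\N^n$ at once. Restricting to finite families $f_1,\ldots,f_N$ and bounding $|\mbf{b}|\le\max_j\deg f_j$ is precisely what makes a common choice of $\mbf{x}_0$ available; once $\mbf{x}_0$ is chosen, the remainder is a transparent duality between the operator expansion and the Taylor expansion at $\mbf{x}_0$.
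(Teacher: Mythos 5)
Your proof is correct and follows the same strategy as the paper: expand derivative operators via Theorem~\ref{operator_expansion}, use $D_{k;q}f=0$ to kill every term with $\mbf{r}\neq\mbf{0}$, evaluate at a point avoiding the denominators, and conclude by Taylor expansion. The one place you genuinely diverge is in how the singular locus is controlled. The paper invokes Theorem~\ref{operator_expansion} only for the finitely many operators $\partial_{r}u_s(\partial_\mbf{x})$ with $1\le r\le n$ and $1\le s\le (n+1)!$, and then generates all higher derivatives of $f$ by iterating these first-order relations together with the product rule; since differentiating a rational function only raises powers of its denominator, every coefficient that ever appears has its poles contained in the zero sets of the finitely many original denominators, so a single point $\mbf{x}'$ works uniformly for every $f\in{\mathcal{H}}_{\mbf{x};q}$ and every order of derivative. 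You instead apply the theorem to each $\partial_\mbf{x}^{\mbf{b}}$ separately, which produces an a priori infinite family of denominators, and you restore finiteness by restricting to a finite linearly independent family and bounding $|\mbf{b}|$ by its maximal degree --- legitimate, since the dimension is the supremum over such families. Both routes are valid; yours is logically self-contained and avoids the paper's somewhat informal ``algorithm'' step, at the cost of giving up the stronger uniform conclusion that the $(n+1)!$ derivative values $u_i(\partial_\mbf{x})f(\mbf{x})\big|_{\mbf{x}=\mbf{x}'}$ at one fixed point determine every $q$-harmonic polynomial simultaneously.
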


\begin{proof}[\bf Proof.]
Our goal is to derive the result by showing that there is a point $\mbf{x}'$ such that  any polynomial $f\in {\mathcal{H}}_{\mbf{x}:q}$ is uniquely determined
by the  value of the $(n+1)!$ derivatives
$$
u_1(\partial_\mbf{x})f(\mbf{x})\Big|_{\mbf{x}=\mbf{x}'},\ 
 u_2(\partial_\mbf{x})f(\mbf{x})\Big|_{\mbf{x}=\mbf{x}'},\  
\ldots, u_{(n+1)!}(\partial_\mbf{x})f(\mbf{x})\Big|_{\mbf{x}=\mbf{x}'}.  
$$
To this end we apply Theorem \ref{operator_expansion} and obtain, for all $1\le r\le n$ and $1\le s\le (n+1)!$,  the expansions
$$
\partial_{r}u_s(\partial_\mbf{x})=\sum_{i=1}^{(n+1)!}\sum_{\sum_k r_k(k+1)\le d_s+1-d_i}
a_{i;\mbf{r}}^{(r,s)}(\mbf{x})u_i(\partial_\mbf{x})D_{1;q}^{r_1}D_{2;q}^{r_2}\cdots D_{n;q}^{r_1}.
$$
Applying both sides of this operator identity to a polynomial $f\in {\mathcal{H}}_{\mbf{x}:q}$ gives
\begin{equation}\label{develop_thm}
\partial_{r}u_s(\partial_\mbf{x})f(\mbf{x})=  \sum_{i=1}^{(n+1)!} 
a_{i;0,0,\,\ldots\,,0}^{(r,s)}(\mbf{x})u_i(\partial_\mbf{x})f(\mbf{x}).
\end{equation}
Now the construction of the monomial basis $u_1(\xi),u_2(\xi),\,\ldots\,,u_{(n+1)!}(\xi)$ used in Theorem~(\ref{critere}) may be carried out in such a manner that the
first $n$ monomials are none other than
$$
\xi_1,\  \xi_2,\, \ldots\, ,\  \xi_n . 
$$
In particular  it follows from  (\ref{develop_thm}) that 
$$
\partial_{r}\partial_{s} f(\mbf{x})=  \sum_{i=1}^{(n+1)!} 
a_{i;0,0,\,\ldots\,,0}^{(r,s)}(\mbf{x})u_i(\partial_\mbf{x})f(\mbf{x}), 
$$
for all $1\le r,s\le n$. We can thus write
$$
\partial_{t}\partial_{r}\partial_{s} f(\mbf{x})=  \sum_{i=1}^{(n+1)!} 
\big(\partial_{t}a_{i;0,0,\,\ldots\,,0}^{(r,s)}(\mbf{x})\big)u_s(\partial_\mbf{x})f(\mbf{x})+
 \sum_{i=1}^{(n+1)!} 
 a_{i;0,0,\,\ldots\,,0}^{(r,s)}(\mbf{x}) \partial_{t}u_s(\partial_\mbf{x})f(\mbf{x}) 
$$
which, using (\ref{develop_thm}) again, gives
\begin{eqnarray*}
\partial_{r}\partial_{s}\partial_{t} f(\mbf{x})
&=&  \sum_{i=1}^{(n+1)!} 
\big(\partial_{t}a_{i;0,0,\,\ldots\,,0}^{(r,s)}(\mbf{x})\big) u_s(\partial_\mbf{x})f(\mbf{x})+
\\
&& \sum_{i=1}^{(n+1)!} 
 a_{i;0,0,\,\ldots\,,0}^{(r,s)}(\mbf{x}) \sum_{j=1}^{(n+1)!}a_{j;0,0,\,\ldots\,,0}^{(t,s)}(\mbf{x})u_j(\partial_\mbf{x})f(\mbf{x}) 
\end{eqnarray*}
We can clearly see  that we have here an algorithm for expressing any derivative of $f(\mbf{x})$
as a linear combinations of the $(n+1)!$ derivatives
\begin{equation}\label{objectif}
u_1(\partial_\mbf{x})f(\mbf{x}) ,\ 
 u_2(\partial_\mbf{x})f(\mbf{x}) ,\  
\ldots, u_{(n+1)!}(\partial_\mbf{x})f(\mbf{x}), 
\end{equation}
where the coefficients of these expansions are products of the coefficients 
$ a_{i;0,0,\,\ldots\,,0}^{(r,s)}(\mbf{x})$ and their successive derivatives. Since differentiation of a rational function
affects its denominator only  by raising its power, it follows that the  singularities in these expansion
are only produced by the zeros of the denominators  of the original coefficients  $ a_{i;0,0,\,\ldots\,,0}^{(r,s)}(\mbf{x})$.
In other words we can safely evaluate these expansions at any point $\mbf{x}'$ which is not in the union of
 the hypersurfaces where the denominators  of  $ a_{i;0,0,\,\ldots\,,0}^{(r,s)}(\mbf{x})$ do vanish.
Since $n$ dimensional space is not the union of  a finite number of hypersurfaces. It follows that
such a point $\mbf{x}'$ can be found with the property that all the derivatives of any $f\in {\mathcal{H}}_{\mbf{x}:q}$
may be expressed in terms of the derivatives in (\ref{objectif}). It follows by applying the   Taylor expansion at $\mbf{x}'$
that $f(\mbf{x})$ itself is determined by these $(n+1)!$  derivatives and thus  
  $(n+1)!$ parameters are sufficient  to uniquely determine an element of $ {\mathcal{H}}_{\mbf{x}:q}$ proving
that its dimension is at most $(n+1)!$ as asserted.
\end{proof}

\section{Last Considerations}
Further computer experiments suggest that a few other interesting facts seem to hold concerning natural variations of the theme discussed in this work. The first one concerns the straight forward extension, of questions raised in section~\ref{sec_regular}, to the so called diagonal situation. Indeed, in the ring $\C[\mbf{x},\mbf{y}]$ of polynomials in two sets of $n$ variables, with $\mbf{y}=y_1,\ldots,y_n$, our experiments suggest that we have

\begin{conj}\label{conj_diag} The  set $\mathcal{D}_n^{\mbf{a}}$ of common polynomial  zeros of the operators
    $$\sum_{i=1}^n a_i\, \partial_{x_i}^k\partial_{y_i}^j,$$
 for all $k,j\in\N$ such that $k+j>0$, is of a bigraded space of dimension $(n+1)^{n-1}$, whenever we have $\mbf{a}=(a_1,\ldots,a_n)$ such that 
\begin{equation}\label{a_i_cond}
    \sum_{k\in K} a_k\not= 0,
 \end{equation}
for all nonempty subsets $K$ of $\{1,\ldots,n\}$.
\end{conj}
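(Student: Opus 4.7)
The plan is to adapt the machinery of Section~\ref{sec_regular} to the bigraded setting. For each $(k,j)\in\N^2$ with $k+j>0$, set
$$
\varphi_{k,j}(\xi,\eta):=\sum_{i=1}^n a_i\,\xi_i^k\,\eta_i^j,
$$
which is the full symbol of the operator $D^{\mbf{a}}_{k,j}:=\sum_i a_i\,\partial_{x_i}^k\partial_{y_i}^j$. Let $I_n^{\mbf{a}}\subset\C[\xi,\eta]$ denote the ideal they generate. My first step would be the geometric Nullstellensatz: under~(\ref{a_i_cond}), the scheme $V(I_n^{\mbf{a}})\subset\C^{2n}$ reduces to the origin, so the quotient is finite dimensional. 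Indeed a common zero $(\xi',\eta')$ amounts to the signed measure $\mu:=\sum_i a_i\,\delta_{(\xi'_i,\eta'_i)}$ on $\C^2$ annihilating every constant-free polynomial. Grouping indices by the coincident points and noting that mass at the origin automatically pairs to zero against constant-free test functions, this forces every cluster $K\subseteq\{1,\ldots,n\}$ of indices sharing a common nonzero location to satisfy $\sum_{k\in K}a_k=0$, directly contradicting~(\ref{a_i_cond}). Thus only the origin remains and, as in the proof of Proposition~\ref{reg_theta}, a high power of every $\xi_i$ and every $\eta_i$ lies in $I_n^{\mbf{a}}$.

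The second step is the bigraded analog of Theorem~\ref{operator_expansion}. Fix a bihomogeneous monomial basis $\{u_1,\ldots,u_N\}$ of the quotient $\CF_{\mbf{a}}[\xi,\eta]/I_n^{\mbf{a}}$, arranged so that the $2n$ linear monomials $\xi_i,\eta_i$ all occur among the $u_s$. Inducting on the order of a differential operator $D\in\CD(\mbf{x},\mbf{y})$ and decomposing its symbol modulo $I_n^{\mbf{a}}$, one expresses $D$ as a $\CF_{\mbf{a},\mbf{x},\mbf{y}}$-linear combination of operators of the form $u_s(\partial_{\mbf{x}},\partial_{\mbf{y}})\cdot D^{\mbf{a}}_{k_1,j_1}\cdots D^{\mbf{a}}_{k_r,j_r}$. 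Applying such expansions of $\partial_{x_r}u_s(\partial)$ and $\partial_{y_r}u_s(\partial)$ to $f\in\mathcal{D}_n^{\mbf{a}}$, the Taylor-evaluation argument at a generic base point (exactly as in the final theorem of Section~\ref{sec_regular}) yields
$$
\dim\mathcal{D}_n^{\mbf{a}}\ \le\ N\ =\ \dim_{\CF_{\mbf{a}}}\!\CF_{\mbf{a}}[\xi,\eta]/I_n^{\mbf{a}}.
$$

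Third, I would identify the right-hand side with $(n+1)^{n-1}$ at the distinguished point $\mbf{a}=(1,\ldots,1)$. There the $\varphi_{k,j}$'s are the polarized power sums $p_{k,j}=\sum_i\xi_i^k\eta_i^j$, which classically generate the ideal of constant-free $\S_n$-diagonal invariants; its quotient is Haiman's diagonal coinvariant ring, of dimension exactly $(n+1)^{n-1}$. Simultaneously $\mathcal{D}_n^{(1,\ldots,1)}$ is the classical space of diagonal harmonics, of the same dimension, so the bound above is tight at this one point.

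The main obstacle is to propagate equality to the whole open locus defined by~(\ref{a_i_cond}). The strategy would be to show that the bigraded Hilbert series of $\C[\xi,\eta]/I_n^{\mbf{a}}$ is constant on that locus, i.e.\ that the family $\{I_n^{\mbf{a}}\}$ is flat there. Combined with the tight computation at $(1,\ldots,1)$ this would force $N=(n+1)^{n-1}$ for every regular $\mbf{a}$; equality in the conjecture would then follow by producing a matching lower bound of $(n+1)^{n-1}$ linearly independent elements of $\mathcal{D}_n^{\mbf{a}}$, for example by deforming the Haglund--Haiman--Loehr parking-function basis of the diagonal coinvariants. Proving flatness is genuinely delicate because nonsymmetric $\mbf{a}$ destroys the $\S_n$-equivariance underlying every known proof of the $(n+1)^{n-1}$ formula; the most concrete route would be to exhibit a universal monomial initial ideal for $\{I_n^{\mbf{a}}\}$ via a Gr\"obner basis whose leading terms are independent of the parameters on the regular locus.
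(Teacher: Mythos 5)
First, be aware that the statement you are addressing appears in the paper as Conjecture~\ref{conj_diag}: the authors offer only computer experiments and partial supporting facts (closure of $\mathcal{D}_n^{\mbf{a}}$ under derivatives and under the operators $E_k$, plus the single-alphabet analogue that follows from Theorem~\ref{critere}), not a proof. So the only question is whether your argument actually closes the conjecture, and it does not.

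Your first three steps are essentially sound and constitute a genuine reduction. The Nullstellensatz computation is correct: a common zero of all $\varphi_{k,j}$ forces, for each cluster of indices sitting at a common nonzero point of $\C^2$, the vanishing of the corresponding subsum of the $a_i$'s, which (\ref{a_i_cond}) forbids; hence the zero locus is the origin and the quotient is finite dimensional. Moreover, since the operators $\sum_i a_i\partial_{x_i}^k\partial_{y_i}^j$ have constant coefficients, you do not even need a bigraded analogue of Theorem~\ref{operator_expansion}: the scalar product of Section 2, extended to $\R[\mbf{x},\mbf{y}]$, exhibits $\mathcal{D}_n^{\mbf{a}}$ as the orthogonal complement of the ideal $I_n^{\mbf{a}}$, so its bigraded Hilbert series \emph{equals} that of $\C[\xi,\eta]/I_n^{\mbf{a}}$ (not merely a bound). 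But this only restates the conjecture as the assertion that $\dim\C[\xi,\eta]/I_n^{\mbf{a}}=(n+1)^{n-1}$ for every $\mbf{a}$ satisfying (\ref{a_i_cond}), and that is where all of the difficulty lives. Unlike the one-alphabet setting of Section~\ref{sec_regular}, the $\varphi_{k,j}$ cannot form a regular sequence (there are far more than $2n$ of them, and the quotient is not a complete intersection even for $\mbf{a}=(1,\ldots,1)$), so finiteness of the zero locus gives no dimension count. Semicontinuity from the single point $(1,\ldots,1)$ only bounds the \emph{generic} dimension above by $(n+1)^{n-1}$; it says nothing at an arbitrary point of the open locus (\ref{a_i_cond}), and the matching lower bound is not supplied: no monomial basis or Gr\"obner degeneration of the diagonal coinvariants is known even in the symmetric case (Haiman's theorem is proved via the Hilbert scheme, and $\S_n$-equivariance is destroyed for general $\mbf{a}$). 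As you acknowledge, the flatness claim and the ``deformed parking-function basis'' are hopes rather than arguments, so the proposal is a reasonable framework for attacking the conjecture but leaves it open.
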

Actually, it seems that  this bigraded Hilbert series is independent of the choice of the $a_i$'s, as long condition (\ref{a_i_cond}) holds. Recall here that when all the $a_i$'s are equal to $1$, we get the space $\mathcal{D}_n$ of diagonal harmonics (see \cite{lattice_diagram, haimanhilb}) for which the bigraded Hilbert series is explicitly known. Thus we could refine the conjecture as stating that $\mathcal{D}_n^{\mbf{a}}$ and $\mathcal{D}_n$ are isomorphic as graded vector spaces.  Among the supporting facts for Conjecture~\ref{conj_diag}, one can observe that an immediate consequence of Theorem~\ref{critere}  is that the graded space $\mathcal{H}_n^{\mbf{a}}$ of polynomials $f(\mbf{x})$ such that  
    $$\sum_{i=1}^n a_i\, \partial_{x_i}^k f(\mbf{x})=0,$$
 for all $k\geq 1$, is of dimension $n!$ with Hilbert series equal to 
  $$(1+t)(1+t+t^2)\cdots (1+t+\ldots + t^{n-1}).$$
It particular its homogeneous component of maximal degree ($=\binom{n}{2}$) is one dimensional. Let us choose some vector $\Delta_n^{\mbf{a}}(\mbf{x})$ whose span is this maximal degree component. Observing that it is closed under derivatives and that its dimension is $n!$, we must conclude that $\mathcal{H}_n^{\mbf{a}}$ coincides with the linear span of all higher order partial derivatives of $\Delta_n^{\mbf{a}}(\mbf{x})$. We formulate this as $\mathcal{H}_n^{\mbf{a}}=\mathcal{L}_\partial (\Delta_n^{\mbf{a}}(\mbf{x}))$. Now it develops that there is a description for the space $\mathcal{D}_n$ of a similar flavour, as is shown by Haiman in~\cite{haimanhilb}. To state the relevant result, let us introduce the operators
   $$E_k:=\sum_{i=1}^n y_i\,\partial x_i^k,$$
and use the notation $\mathcal{L}_{\partial,E} (f(\mbf{x},\mbf{y}))$ for the smallest vector space containing a given polynomial $f(\mbf{x},\mbf{y})$ and which is closed under partial derivatives and applications of the operators $E_k$. Among the various interestring results of Haiman is the fact that $\mathcal{D}_n=\mathcal{L}_{\partial,E} (\Delta_{\S_n}(\mbf{x}))$. This relies in part on the property of $\mathcal{D}_n$ of being closed under applications of the $E_k$'s. The proof of this property is easily adapted to the context of the spaces $\mathcal{D}_n^{\mbf{a}}$, under no special conditions on $\mbf{a}$, and it is quite clear that $\mathcal{D}_n^{\mbf{a}}$ is always closed under partial derivatives. To sum up, we arrive at the conclusion that conditions (\ref{conditions_ai}) imply that $\mathcal{L}_{\partial,E} (\Delta_n^{\mbf{a}}(\mbf{x}))$ is a subspace of $\mathcal{D}_n^{\mbf{a}}$. In fact, under the hypothesis that Conjecture~\ref{conj_diag} holds, we should have equality. An explicit calculation of the polynomial $\Delta_n^{\mbf{a}}(\mbf{x})$, say for $n=3$ and the $a_i$'s generic, makes apparent some of its nice properties:
\begin{eqnarray*}
\Delta_3^{\mbf{a}}(\mbf{x})&=&{\frac {2\,{x_1}^{2}x_2}{ a_1\,(a_1+a_2 ) }}
-{\frac {2\,{x_1}^{2}x_3}{ a_1\,(a_1+a_3 ) }}
-{\frac {2\,{x_2}^{2}x_1}{ a_2\,(a_1+a_2 ) }}
+{\frac {2\,{x_2}^{2}x_3}{ a_2\, ( a_2+a_3 ) }}\\
&&\qquad 
 +{\frac {2\,{x_3}^{2}x_1}{ a_3\,( a_1+a_3 )}}
 -{\frac {2\,{x_3}^{2}x_2}{ a_3\,( a_2+a_3 ) }}
 +{\frac {2\, (a_3-a_2)\, {x_1}^{3}}{3\,a_1\,(a_1+a_2)(a_1+a_3)}}\\
&&\qquad 
+{\frac {2\, (a_1-a_3)\, {x_2}^{3}}{3\,a_2\,(a_1+a_2)(a_2+a_3)}}
+{\frac { 2\,(a_2-a_1)\, {x_3}^{3}}{3\,a_3\,(a_1+a_3)(a_2+a_3)}}
 \end{eqnarray*}
 which is here normalized so that it specializes to the classical Vandermonde determinant when all the $a_i$ are set $1$. 
An intriguing  observation about this polynomial can be made if one considers the $a_i$'s as variables on which $\S_n$  acts just as it does on the $x_i$'s. Then one has the expression
   $$\Delta_3^{\mbf{a}}(\mbf{x})= 
   R^{\pm}_3\left({\frac {2\,{x_1}^{2}x_2}{ a_1\,(a_1+a_2 ) }}
+{\frac {( a_3-a_2)\, {x_1}^{3}}{3\,a_1\,( a_1+a_2) ( a_1+a_3) }}\right)
$$
where $R^\pm_n:=\sum_{\sigma\in\S_n} \mathrm{sign}(\sigma)\,\sigma$ stands for the anti-symmetrization operator. 

Another interesting experimental observation concerning the space of common zeros of $D_1$ and $D_2$ with general operators
\begin{eqnarray*}
  D_1&:=&\sum_{i=1}^n a_{i}\,x_i \partial_{i}^{2} + b_{i}\, \partial_{i},\\
  D_2&:=&\sum_{i=1}^n c_{i}\,x_i \partial_{i}^{3} + d_{i}\, \partial_{i}^2,
\end{eqnarray*}
is that there seem to be conditions, similar to (\ref{a_i_cond}), for which this space is always $n!$-dimensionnal.

\end{document}